\newtheorem{theorem}{Theorem}[section]
\newtheorem{lemma}[theorem]{Lemma}
\newtheorem{proposition}[theorem]{Proposition}
\newtheorem{corollary}[theorem]{Corollary}
\theoremstyle{definition}
\newtheorem{definition}[theorem]{Definition}
\newtheorem{example}[theorem]{Example}
\newtheorem{Theorem}{\quad Theorem}[section]
\newtheorem{remark}[Theorem]{\quad Remark}
\numberwithin{equation}{section}
\begin{document}
\title[ Approximate numerical radius orthogonality] {Approximate numerical radius orthogonality}

\author[M. Amyari and M. Moradian Khibary]
{Maryam Amyari $^{1*}$ \MakeLowercase{and} Marzieh Moradian Khibary$^2$}

\address{$^{1*}$ Department of Mathematics, Mashhad Branch,
Islamic Azad University, Mashhad, Iran}
\email{maryam\_amyari@yahoo.com and amyari@mshdiau.ac.ir}

\address{$^2$ Department of Mathematics, Farhangian University, Mashhad, Iran}
\email{mmkh926@gmail.com}

\subjclass[2010]{Primary 47A12; Secondary 46B20; 46C05.}

\keywords{Hilbert space, numerical radius, approximate numerical radius orthogonality, Numerical radius derivation.\\
$*$Corresponding author}
\maketitle

\begin{abstract}
We introduce the notion of approximate numerical radius (Birkhoff) orthogonality and investigate its significant properties. Let $T, S\in \mathbb{B}(\mathscr{H})$ and  $\varepsilon \in [0, 1)$. We say that $T$ is approximate numerical radius  orthogonal to $S$ and we write $T\perp^{\varepsilon}_{\omega} S$ if
$$\omega^2(T+\lambda S)\geq \omega^2(T)-2\varepsilon \omega(T) \omega(\lambda S)\,\,\, \text{for all }\lambda\in\mathbb{C}.$$
We show that $T\perp^{\varepsilon}_{\omega} S$ if and only if
$\displaystyle\inf_{\theta\in [0, 2\pi)} D^{\theta}_{\omega}(T, S)
\geq -\varepsilon \omega(T) \omega(S)$ in which $D^{\theta}_{\omega}(T, S)=\displaystyle\lim_{r\to 0^+} \frac{\omega^2(T+re^{i\theta} S)-\omega^2(T)}{2r}$; and this occurs  if and only if
for every $\theta\in[0,2\pi)$, there exists a sequence $\{x_n^{\theta}\}$  of unit vectors in $\mathscr{H}$ such that
$$\displaystyle\lim_{n\to \infty} |\langle Tx^{\theta}_n, x^{\theta}_n\rangle|=\omega(T),\,\, \text{and}\,\, \displaystyle\lim_{n\to \infty} {\rm  Re}\{e^{-i\theta} \langle Tx^{\theta}_n, x^{\theta}_n\rangle\overline{\langle Sx^{\theta}_n, x^{\theta}_n\rangle}\}\geq -\varepsilon \omega(T) \omega(S),$$ where  $\omega(T)$ is the numerical radius  of $T$.
\end{abstract} \maketitle
\section{Introduction and preliminaries}

One of the most important types of orthogonality in the setting of normed spaces is the Birkhoff--James orthogonality. Let $(X, \|\cdot\|)$ be a linear normed space and $x, y \in X$. A vector $x$ is called Birkhoff--James orthogonal to a vector $y$, written as $x \perp_B y$,
if $\|x+\lambda y\|\geq \|x\|$ for every $\lambda \in \mathbb{C}$.

Many mathematicians generalized the notion of Birkhoff--James orthogonality in the set up of normed spaces. Dragomir \cite{SDR} introduced the notion of $\varepsilon$-Birkhoff--James orthogonality in a real normed space $X$ as follows.

Let $x, y \in X$ and  $\varepsilon \in [0, 1)$. We say that $x$ is  $\varepsilon$-Birkhoff--James orthogonal to $y$ if \[\|x+\lambda y\|\geq (1-\varepsilon)\|x\|\] for all $\lambda \in \mathbb{R}$.

Chmieli\'{n}ski \cite{JCH}  introduced  another notion of $\varepsilon$-Birkhoff--James orthogonalityin the setting of normed spaces, where a vector $x$ is said to be approximate Birkhoff--James orthogonal to a vector $y$, written as  $x \perp^{\varepsilon}_B y$, if \[\|x+\lambda y\|^2\geq \|x\|^2-2\varepsilon\|x\|\|\lambda y\|\] for all $\lambda \in \mathbb{R}$. He also proved that in an inner product space $x \perp^{\varepsilon}_B$ if and only if $|\langle x, y\rangle|\leq \varepsilon\|x\|\|y\|$. The notion  of approximate orthogonality has been developed in several
settings; see \cite{CSW, AK, MOS}.

STANDING NOTATION: Let  $(\mathscr{H},\langle \cdot ,\cdot \rangle)$ be a Hilbert space and
 $\mathbb{B}(\mathscr{H})$ be the algebra of all bounded linear operators on $\mathscr{H}$ with the identity $I$. A capital letter denotes a bounded linear operator in ${\mathbb B}({\mathscr H})$. The numerical radius of $T$ is defined by
$$\omega(T) = \sup\{|\langle Tx, x\rangle|: x\in\mathscr{H}, \|x\| = 1\}.$$ Throughout the paper, we assume  $\varepsilon \in [0, 1)$ and for calculate the numerical raduis, we use some formulas appeared in \cite[Theorem 3]{RS}, \cite[Theorem 2.1]{PS}, and \cite[Theorem 3.7]{HK}.

Recently, the authors of \cite{AM} introduced the notion of numerical radius (Birkhoff) orthogonality for operators in $\mathbb{B}(\mathscr{H})$. They called $T$ the numerical radius orthogonal to $S$, denoted by $T\perp_{\omega B}S$ if $$\omega(T+\lambda S)\geq\omega(T)\quad\mbox{for all}\quad\lambda\in\mathbb{C}.$$
We introduce an approximate version of the above notion and present some of its characterizations. The paper is organized as follows.

In section 2, we introduce the notion of approximate  numerical radius orthogonality ``$\perp^{\varepsilon}_{\omega} $'' and prove that for two operators $T$ and $S$, it holds that $T\perp^{\varepsilon}_{\omega}S$ if and only if
for every $\theta\in[0,2\pi)$ there exists a sequence $\{x_n^{\theta}\}$  of unit vectors in $\mathscr{H}$ such that
$$\displaystyle\lim_{n\to \infty} |\langle Tx^{\theta}_n, x^{\theta}_n\rangle|=\omega(T),\,\, \text{and}\,\, \displaystyle\lim_{n\to \infty} {\rm  Re}\{e^{-i\theta} \langle Tx^{\theta}_n, x^{\theta}_n\rangle\overline{\langle Sx^{\theta}_n, x^{\theta}_n\rangle}\}\geq -\varepsilon \omega(T) \omega(S).$$

In section 3, we introduce the notion of numerical radius derivation ($\omega$-
derivation) and show that $T\perp^{\varepsilon}_{\omega}S$ if and only if
$\displaystyle\inf_{\theta\in [0, 2\pi)} D^{\theta }_{\omega}(T, S)
\geq -\varepsilon \omega(T) \omega(S)$, where $D^{\theta}_{\omega}(T, S)=\displaystyle\lim_{r\to 0^+} \frac{\omega^2(T+re^{i\theta} S)-\omega^2(T)}{2r}$.


\section{Approximate orthogonality}

In this section, we introduce the notion of approximate numerical radius orthogonality and state some of its basic properties.
\begin{definition}
We say that an operator $T$ is approximate numerical radius (Birkhoff) orthogonal to an operator $S$ and we write $T\perp^{\varepsilon}_{\omega} S$ if
$$\omega^2(T+\lambda S)\geq \omega^2(T)-2\varepsilon \omega(T) \omega(\lambda S) \,\, \text{for all }\lambda\in\mathbb{C}.$$
\end{definition}
It is easy to see that  $T\perp^{\varepsilon}_{\omega} S$ and $\alpha T\perp^{\varepsilon}_{\omega} \beta S \,\,(\alpha, \beta \in \mathbb{C})$ are equivalent.
 The following example shows that the relation $\perp^{\varepsilon}_{\omega}$ is not symmetric, in general.
\begin{example}
Suppose that
$ T=
\begin{bmatrix}
i&0\\
0&0
\end{bmatrix}$
and
$S=
\begin{bmatrix}
0&1\\
0&-1
\end{bmatrix}$
are in $\mathbb{M}_2(\mathbb{C})$.
Simple computations show that
$\omega(T)=1$ $\omega(S)=\dfrac{1+\sqrt{2}}{2}$. Further,
$$\omega(T+\lambda S)=\omega\left( \begin{bmatrix}
i&\lambda\\
0&-\lambda
\end{bmatrix}\right)\geq \max\{|\lambda|, 1, \frac{|\lambda|}{2}\}.$$
Hence, $\omega^2(T+\lambda S)\geq \omega^2(T)-2\varepsilon \omega(T) \omega( \lambda S )$, that is $T\perp^ \varepsilon_{\omega} S$.

For $\lambda=\frac{-i}{2}$, we get
$$\omega(S-\frac{i}{2} T)=\omega\left( \begin{bmatrix}
1&1\\
0&-1
\end{bmatrix}\right)=\frac{1+\sqrt{13}}{4}\approx 1.151$$
$\omega^2(S-\frac{i}{2}T)\approx 1.325$ and $\omega^2(S)\approx 1.457$. Thus,
$\omega^2(S-\frac{i}{2}T)\leq \omega^2(S)-2\varepsilon \frac{1}{2}\omega (S) \omega (T)$  for $\varepsilon \in (0, 0.01)$.
Hence, $S\not\perp^\varepsilon_{\omega} T$.
\end{example}
The following proposition yields some relations between approximate Birkhoff--James orthogonality  $\perp^{\varepsilon}_{ B}$ and approximate numerical radius orthogonality $\perp^{\varepsilon}_{\omega}$ under some conditions. The proof is easy and so we omit it.
\begin{proposition}
(i) If $T=T^*$, then  $T\perp^{\varepsilon}_{\omega} S$ implies that  $T\perp^{\varepsilon}_{ B} S$.

(ii) If $T^2=0$, then $T\perp^{\varepsilon}_{ B} S$ entails that  $T\perp^{\varepsilon}_{\omega} S$.
\end{proposition}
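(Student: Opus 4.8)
The plan is to reduce both implications to the standard comparison between the numerical radius and the operator norm, applied pointwise in $\lambda$. I would first recall three facts: for every $A\in\mathbb{B}(\mathscr{H})$ one has $\tfrac12\|A\|\le\omega(A)\le\|A\|$; if $A=A^*$ then $\omega(A)=\|A\|$; and if $A^2=0$ then $\omega(A)=\tfrac12\|A\|$. Here $T\perp^{\varepsilon}_{B}S$ is read in the operator norm, i.e. $\|T+\lambda S\|^2\ge\|T\|^2-2\varepsilon\|T\|\,\|\lambda S\|$, and in each part the inequality is verified for a fixed but arbitrary $\lambda$.

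For (i), assume $T=T^*$ and $T\perp^{\varepsilon}_{\omega}S$. On the left I would use $\|T+\lambda S\|\ge\omega(T+\lambda S)$, hence $\|T+\lambda S\|^2\ge\omega^2(T+\lambda S)$, and then invoke the hypothesis. Self-adjointness turns $\omega^2(T)$ into $\|T\|^2$, while $\omega(\lambda S)\le\|\lambda S\|$ together with $-2\varepsilon\omega(T)\le 0$ lets me replace $-2\varepsilon\omega(T)\omega(\lambda S)$ by the smaller quantity $-2\varepsilon\|T\|\,\|\lambda S\|$. Chaining these gives $\|T+\lambda S\|^2\ge\|T\|^2-2\varepsilon\|T\|\,\|\lambda S\|$, which is exactly $T\perp^{\varepsilon}_{B}S$.

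For (ii), assume $T^2=0$ and $T\perp^{\varepsilon}_{B}S$. Now I would bound the left side from below by $\omega^2(T+\lambda S)\ge\tfrac14\|T+\lambda S\|^2$ and apply the hypothesis to obtain $\omega^2(T+\lambda S)\ge\tfrac14\|T\|^2-\tfrac12\varepsilon\|T\|\,\|\lambda S\|$. The condition $T^2=0$ gives $\tfrac14\|T\|^2=\omega^2(T)$ and $\tfrac12\|T\|=\omega(T)$, so the right-hand side equals $\omega^2(T)-\varepsilon\omega(T)\,\|\lambda S\|$. Finally the lower bound $\|\lambda S\|\le 2\omega(\lambda S)$, which is $\tfrac12\|\cdot\|\le\omega(\cdot)$ applied to $\lambda S$, yields $\omega^2(T+\lambda S)\ge\omega^2(T)-2\varepsilon\omega(T)\omega(\lambda S)$, that is $T\perp^{\varepsilon}_{\omega}S$.

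The computation is entirely routine, consistent with the authors' remark that the proof is easy, so there is no genuine obstacle; the only point requiring care is matching the multiplicative constants. In (i) the single bound $\omega\le\|\cdot\|$ is used in both the main term and the perturbation term, whereas in (ii) one must pair the lower bound $\omega(T+\lambda S)\ge\tfrac12\|T+\lambda S\|$ on the left with the same lower bound applied to $\lambda S$ on the right, so that the factor $\tfrac14$ produced on the left is exactly compensated and the approximate constant stays $2\varepsilon$ rather than degrading.
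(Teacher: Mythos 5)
Your proof is correct: both implications follow exactly as you write them, using $\tfrac12\|\cdot\|\le\omega(\cdot)\le\|\cdot\|$ together with the equalities $\omega(T)=\|T\|$ for $T=T^*$ and $\omega(T)=\tfrac12\|T\|$ for $T^2=0$, and the constants are matched so that the approximation parameter stays $\varepsilon$ in both parts. The paper omits the proof as ``easy,'' and your argument is precisely the intended routine one, so there is nothing to add.
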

The next example shows that $T\perp^{\varepsilon}_{ B} S$ does not entail
$T\perp^{\varepsilon}_{\omega} S$, in general.
\begin{example}
Suppose that
$
T=\begin{bmatrix}
0&1\\
0&-1
\end{bmatrix}
$
and
$
S=\begin{bmatrix}
1&0\\
0&0
\end{bmatrix}
$
 are in $\mathbb{M}_2(\mathbb{C})$.
 Then $\|T\|=
 \sqrt{2}$ and $\|S\|=1$ and for every $\lambda \in \mathbb{C}$,  we have
 $\| T+\lambda S \|^2= \dfrac{2+|\lambda|^2+\sqrt{4+|\lambda|^4}}{2}$. Hence
$\|T+ \lambda S\|^2\geq 2\geq 2-2\varepsilon|\lambda|=2-\sqrt{2}\varepsilon \|T\|\|\lambda S\|$. Thus, $T\perp^{\varepsilon}_{ B} S$.

Also we have
$\omega( T)=\frac{1+\sqrt{2}}{2},\,\, \omega (S)=1$, and
$$\omega(T+\lambda S)=\omega\left( \begin{bmatrix}
\lambda &1\\
0&-1
\end{bmatrix}\right)=\frac{\sqrt{5}}{2}.$$
For $\lambda=1$,  we reach $1.24\approx \omega^2(T+ S)< \omega^2(T)-2\varepsilon \omega(T) \omega(S)\approx  1.43$ showing that $T\not\perp^{\varepsilon}_{\omega} S$ for $\varepsilon \in (0, 0.01)$.
\end{example}
We give an example of two operators $T$ and $S$ such that  $T\not\perp_{\omega B} S$  while $T \perp ^\varepsilon _{\omega} S$.
\begin{example}
Suppose that
$T=
\begin{bmatrix}
2&0\\
0&0
\end{bmatrix}
\,\, and \,\,
S=
\begin{bmatrix}
1&1\\
0&1
\end{bmatrix}
$
are in $\mathbb{M}_2(\mathbb{C})$. Straightforward computations give us
$\omega (T)=2,\,\, \omega (S)=\frac{3}{2}$.
If $\lambda=-1$, then
$$\omega(T-S)=\omega\left( \begin{bmatrix}
1 &-1\\
0& -1
\end{bmatrix}\right)=\frac{\sqrt{5}}{2}< 2.$$
Hence, $T\not\perp_{\omega B} S$. We also have
$$\omega(T+\lambda S)=\omega\left( \begin{bmatrix}
2+\lambda & \lambda\\
0& \lambda
\end{bmatrix}\right)\geq \max \{|\lambda|,|\lambda +2|\}.$$
Thus,
$\omega^2(T+\lambda S)\geq \max \{|\lambda|^2, |2+\lambda|^2\}\geq
4-6\varepsilon |\lambda |=\omega^2(T)-2\varepsilon |\lambda | \omega(T) \omega(S)$.
Therefore, $T \perp ^\varepsilon _{\omega} S$ for each $\varepsilon \in (\frac{2}{3}, 1)$.
\end{example}
Mal et al. \cite[Theorem 2.3]{AM} characterized  the numerical radius Birkhoff orthogonality of operators on a  Hilbert space. In \cite{MMM}, the authors investigated some aspects of numerical radius orthogonality.
Inspired by these papers, we characterize the approximate numerical radius orthogonality of operators acting on a Hilbert space.

\begin{theorem}\label{*}
The relation $T\perp^{\varepsilon}_{\omega} S$ holds if and only if for every $\theta \in [0, 2\pi)$, there exists a sequence $\{x^{\theta}_n\}_{n\in\mathbb{N}}$ of unit vectors in $\mathscr{H}$ such that the
following two conditions hold:

(i) $\displaystyle\lim_{n\to \infty} |\langle Tx^{\theta}_n, x^{\theta}_n\rangle| =\omega(T)$

(ii) $\displaystyle\lim_{n\to \infty} {\rm  Re}\{e^{-i\theta} \langle Tx^{\theta}_n, x^{\theta}_n\rangle\overline{\langle Sx^{\theta}_n, x^{\theta}_n\rangle}\}\geq -\varepsilon \omega(T)\omega(S)$.
\end{theorem}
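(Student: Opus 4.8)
The plan is to argue directly from the defining inequality, using only that $\omega(\cdot)$ is a supremum over unit vectors, the homogeneity $\omega(\lambda S)=|\lambda|\,\omega(S)$, and the elementary expansion
\[
|\langle(T+re^{i\theta}S)x,x\rangle|^2=|\langle Tx,x\rangle|^2+2r\,{\rm Re}\{e^{-i\theta}\langle Tx,x\rangle\overline{\langle Sx,x\rangle}\}+r^2|\langle Sx,x\rangle|^2,
\]
valid for every unit vector $x$, every $r\geq 0$ and every $\theta\in[0,2\pi)$. This expansion is what makes conditions (i) and (ii) appear naturally.

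For sufficiency ($\Leftarrow$) I would fix $\lambda\in\mathbb{C}$, write $\lambda=re^{i\theta}$ with $r=|\lambda|$ and $\theta\in[0,2\pi)$, and insert the given sequence $\{x^{\theta}_n\}$ into the expansion. Since $\omega(T+\lambda S)\geq|\langle(T+\lambda S)x^{\theta}_n,x^{\theta}_n\rangle|$ for each $n$, I drop the nonnegative $r^2$-term and pass to the limit; conditions (i) and (ii) then give $\omega^2(T+\lambda S)\geq\omega^2(T)-2\varepsilon r\,\omega(T)\omega(S)=\omega^2(T)-2\varepsilon\,\omega(T)\omega(\lambda S)$, which is exactly $T\perp^{\varepsilon}_{\omega}S$ (the case $\lambda=0$ being trivial).

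The substantive direction is necessity ($\Rightarrow$). Fixing $\theta$, I exploit the hypothesis along the ray $\lambda=re^{i\theta}$, namely $\omega^2(T+re^{i\theta}S)\geq\omega^2(T)-2\varepsilon r\,\omega(T)\omega(S)$ for all $r>0$. As $\omega(T+re^{i\theta}S)$ is a supremum, for each $n$ I can choose a unit vector $x_n$ (attached to a value $r=r_n>0$) with $|\langle(T+r_ne^{i\theta}S)x_n,x_n\rangle|^2>\omega^2(T+r_ne^{i\theta}S)-\delta_n$. Combining this with the orthogonality inequality and the expansion, and then (a) bounding the cross-term and the $S$-term above by $\omega(T)\omega(S)$ and $\omega^2(S)$ to isolate $|\langle Tx_n,x_n\rangle|^2$, and (b) using $|\langle Tx_n,x_n\rangle|^2\leq\omega^2(T)$ and $|\langle Sx_n,x_n\rangle|^2\geq 0$ to isolate the cross-term, yields
\[
|\langle Tx_n,x_n\rangle|^2>\omega^2(T)-(2\varepsilon+2)r_n\,\omega(T)\omega(S)-r_n^2\,\omega^2(S)-\delta_n,
\]
\[
{\rm Re}\{e^{-i\theta}\langle Tx_n,x_n\rangle\overline{\langle Sx_n,x_n\rangle}\}>-\tfrac{r_n}{2}\,\omega^2(S)-\varepsilon\,\omega(T)\omega(S)-\tfrac{\delta_n}{2r_n}.
\]

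The crux — and the step I expect to be the main obstacle — is to choose the two rates compatibly so that a \emph{single} sequence satisfies both (i) and (ii). Taking for instance $r_n=1/n$ and $\delta_n=1/n^2$ gives $r_n\to 0$, $\delta_n\to 0$ and $\delta_n/r_n\to 0$: the first estimate, together with $|\langle Tx_n,x_n\rangle|\leq\omega(T)$, then forces $\lim_n|\langle Tx_n,x_n\rangle|=\omega(T)$, which is (i), while the second gives $\liminf_n{\rm Re}\{e^{-i\theta}\langle Tx_n,x_n\rangle\overline{\langle Sx_n,x_n\rangle}\}\geq-\varepsilon\,\omega(T)\omega(S)$. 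Since the cross-term is bounded above by $\omega(T)\omega(S)$, I would finally pass to a subsequence along which it converges; its limit is then $\geq-\varepsilon\,\omega(T)\omega(S)$, yielding (ii), while (i) persists on the subsequence. Relabelling produces the required $\{x^{\theta}_n\}$, and performing this for each $\theta$ completes the proof. The degenerate case $\omega(T+re^{i\theta}S)=0$ (i.e.\ $T=-re^{i\theta}S$) is disposed of separately and routinely.
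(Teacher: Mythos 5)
Your proof is correct and takes essentially the same route as the paper's: the identical expand-and-drop argument for sufficiency, and for necessity the same choice of near-maximizing unit vectors along $\lambda = r_n e^{i\theta}$ with $r_n = 1/n$, $\delta_n = 1/n^2$, the same two rearrangements isolating $|\langle Tx_n, x_n\rangle|^2$ and the cross-term, and the same final passage to a convergent subsequence of the bounded cross-term. The only blemish is your parenthetical justification ``using $|\langle Sx_n,x_n\rangle|^2 \geq 0$'' in step (b): since that term sits on the larger side of the inequality, what you actually need (and what your displayed bound, with its $-\tfrac{r_n}{2}\,\omega^2(S)$ term, in fact uses) is the upper bound $|\langle Sx_n,x_n\rangle|^2 \leq \omega^2(S)$ --- a cosmetic slip, not a gap.
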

\begin{proof}
($\Longleftarrow$) Let $\lambda\in\mathbb{C}$. Then $\lambda= |\lambda|e^{i\theta}$ for some $\theta \in [0, 2\pi)$. By the assumption, there exists a sequence $\{x^{\theta}_n\}_{n\in\mathbb{N}}$ of unit vectors in $\mathscr{H}$ such that (i) and (ii) hold. Thus,
\begin{align*}
\omega^2(T+\lambda S)& \geq \lim_{n\to \infty}|\langle (T+\lambda S)x^{\theta}_n, x^{\theta}_n\rangle|^2\\
&=\lim_{n\to \infty}\left(|\langle Tx^{\theta}_n, x^{\theta}_n \rangle|^2+2|  \lambda|{\rm Re}\{e^{-i\theta}\langle  Tx^{\theta}_n, x^{\theta}_n\rangle\overline{\langle Sx^{\theta}_n, x^{\theta}_n\rangle}\}+|\lambda|^2|\langle Sx^{\theta}_n, x^{\theta}_n\rangle|^2 \right)\\
&\geq \lim_{n\to \infty}\left(|\langle Tx^{\theta}_n, x^{\theta}_n \rangle|^2+2|  \lambda|{\rm Re}\{e^{-i\theta}\langle  Tx^{\theta}_n, x^{\theta}_n\rangle\overline{\langle Sx^{\theta}_n, x^{\theta}_n\rangle}\}\right)\\
&\geq \omega^2(T)-2\varepsilon \omega(T) \omega(\lambda S)
\end{align*}
Thus, $T\perp^{\varepsilon}_{\omega} S$.

($\Longrightarrow$)
Let $\theta\in[0,2\pi)$. We drive from $T\perp^{\varepsilon}_{\omega} S$ that
$\omega^2(T+\lambda  S)\geq\omega^2(T)-2\varepsilon \omega(T) \omega(\lambda S)$ for all $\lambda\in\mathbb{C}$. Hence, $\omega^2\left(T+\frac{ e^{i\theta}}{n} S\right)\geq\omega^2(T)-2\varepsilon \omega(T)\omega\left(\frac{e^{i\theta}}{n} S\right)$ for all $n\in \mathbb{N}$.

For every $n\in \mathbb{N}$ there exists $x^{\theta}_n$ with $\|x^{\theta}_n\|=1$
such that
$$\omega^2\left(T+\frac{e^i{\theta}}{n} S\right)-\frac{1}{n^2}< \left|\left\langle \left(T+\frac{ e^{i\theta}}{n} S\right)x^{\theta}_n, x^{\theta}_n\right\rangle\right|^2,$$
whence
\begin{align}\label{amy1}
&\hspace{-1cm} \omega^2(T)-\frac{2\varepsilon}{n} \omega(T)\omega( S)-\frac{1}{n^2}\nonumber \\ 
&\leq\omega^2\left(T+\frac{e^{i\theta}}{n} S\right )-\frac{1}{n^2} \nonumber \\
& <\left|\left\langle \left(T+\frac{ e^{i\theta}}{n} S\right)x^{\theta}_n, x^{\theta}_n\right\rangle\right|^2\nonumber\\
&=\Big(|\langle Tx^{\theta}_n, x^{\theta}_n \rangle|^2+\frac{2}{n} {\rm Re}\{e^{-i\theta}\langle  Tx^{\theta}_n, x^{\theta}_n\rangle\overline{\langle Sx^{\theta}_n, x^{\theta}_n\rangle}\}+\frac{1}{n^2}|\langle Sx^{\theta}_n, x^{\theta}_n\rangle|^2 \Big).
\end{align}
Therefore,
\begin{align*}
\frac{n}{2}(\omega^2(T)-|\langle Tx^{\theta}_n, x^{\theta}_n \rangle|^2)
&<{\rm Re}\{e^{-i\theta}\langle  Tx^{\theta}_n, x^{\theta}_n\rangle\overline{\langle Sx^{\theta}_n, x^{\theta}_n\rangle}\}\\
&+\frac{1}{2n}\omega^2(S)+\frac{2}{n}+\varepsilon \omega(T)\omega( S)\qquad (n\in \mathbb{N}),
\end{align*}
and hence
\begin{align}\label{21}
0\leq {\rm Re}\{e^{-i\theta}\langle  Tx^{\theta}_n, x^{\theta}_n\rangle\overline{\langle Sx^{\theta}_n, x^{\theta}_n\rangle}\}
+\frac{1}{2n}\omega^2(S)+\frac{2}{n}+\varepsilon \omega(T)\omega( S) \qquad (n\in \mathbb{N}).
\end{align}
Note that \{$\langle Tx^{\theta}_n, x^{\theta}_n \rangle\}$ and $\{\langle Sx^{\theta}_n, x^{\theta}_n \rangle\}$ are two bounded sequence in $\mathbb{C}$. Therefore, by  passing to subsequences of $\{x^{\theta}_n\}_{n\in\mathbb{N}}$, if necessary, we can assume that these twosequences are convergent.  Now, inequality \eqref{21} implies that
$$\lim_{n\to \infty}{\rm Re}\{e^{-i\theta}\langle  Tx^{\theta}_n, x^{\theta}_n\rangle\overline{\langle Sx^{\theta}_n, x^{\theta}_n\rangle}\}\geq -\varepsilon \omega(T)\omega(S).$$
Thus, (ii) is valid.

We shall prove (i). It follows from \eqref{amy1} that
\begin{align*}
\omega^2(T)&-\frac{2\varepsilon}{n} \omega(T)\omega(S)-\frac{1}{n^2}\\
&\leq \Big(|\langle Tx^{\theta}_n, x^{\theta}_n \rangle|^2+\frac{2}{n} {\rm Re}\{e^{-i\theta}\langle  Tx^{\theta}_n, x^{\theta}_n\rangle\overline{\langle Sx^{\theta}_n, x^{\theta}_n\rangle}\}+\frac{1}{n^2}|\langle Sx^{\theta}_n, x^{\theta}_n\rangle|^2 \Big) \\
&\leq |\langle Tx^{\theta}_n, x^{\theta}_n \rangle|^2+\frac{2}{n} |\langle  Tx^{\theta}_n, x^{\theta}_n\rangle||\langle Sx^{\theta}_n, x^{\theta}_n\rangle|+\frac{1}{n^2}|\langle Sx^{\theta}_n, x^{\theta}_n\rangle|^2\\
&\leq |\langle Tx^{\theta}_n, x^{\theta}_n \rangle|^2+\frac{2}{n} \omega(T)\omega( S)+\frac{1}{n^2}\omega^2 (S),
\end{align*}
for all $n\in \mathbb{N}$. Hence
\begin{align*}
\omega^2(T)\geq |\langle Tx^{\theta}_n, x^{\theta}_n \rangle|^2 &\geq
\omega^2(T)-\frac{2\varepsilon}{n} \omega(T)\omega(S)-\frac{1}{n^2}-\frac{2}{n} \omega(T)\omega(S)-\frac{1}{n^2}\omega^2 (S),
\end{align*}
for all $n\in \mathbb{N}$.
Therefore,$\displaystyle\lim_{n\to \infty}|\langle Tx^{\theta}_n, x^{\theta}_n \rangle|= \omega(T)$.
\end{proof}
\begin{remark}
Due to the homogeneity of relations $\perp^{\varepsilon}_{\omega} $, without loss of generality, we may assume that $\omega(T)= \omega(S)=1$. Then $T\perp^{\varepsilon}_{\omega} S$
if and only if for every $\theta \in [0, 2\pi)$, there exists a sequence $\{x^{\theta}_n\}_{n\in\mathbb{N}}$ of unit vectors in $\mathscr{H}$ such that the
following two conditions hold:

(i) $\displaystyle\lim_{n\to \infty} |\langle Tx^{\theta}_n, x^{\theta}_n\rangle|=1$,

(ii) $\displaystyle\lim_{n\to \infty} {\rm  Re}\{e^{-i\theta} \langle Tx^{\theta}_n, x^{\theta}_n\rangle\overline{\langle Sx^{\theta}_n, x^{\theta}_n\rangle}\}\geq -\varepsilon$.
\end{remark}
As an application of Theorem \ref{*}, we get the following results. 
 \begin{corollary}\label{22}
If $T\perp^{\varepsilon}_{\omega} I$, then $I \perp^{\varepsilon}_{\omega}T$.
 \begin{proof}
Let $\theta \in [0, 2\pi)$. Since $T\perp^{\varepsilon}_{\omega}I$, for $\varphi=2\pi -\theta $  there exists a sequence $\{x^{\varphi}_n\}_{n\in\mathbb{N}}$ of unit vectors in $\mathscr{H}$ such that
$\displaystyle\lim_{n\to \infty} |\langle Tx^{\varphi}_n, x^{\varphi}_n\rangle| =\omega(T)$ and\\
 $\displaystyle\lim_{n\to \infty}{\rm  Re}\{e^{-i\varphi}\langle Tx^{\varphi}_n, x^{\varphi}_n\rangle  \overline{\langle Ix^{\varphi}_n, x^{\varphi}_n\rangle}\}\geq -\varepsilon \omega (T)$.
 Put $x^{\theta}_n=x^{\varphi}_n$. Then $\displaystyle\lim_{n\to \infty} |\langle Ix^{\theta}_n, x^{\theta}_n\rangle|=1$ and
 \begin{align*}
\lim_{n\to \infty}{\rm  Re}\{e^{-i\theta}\langle Ix^{\theta}_n, x^{\theta}_n\rangle  \overline{\langle Tx^{\theta}_n, x^{\theta}_n\rangle}\}&=
 \displaystyle\lim_{n\to \infty}{\rm  Re}\{\overline{e^{-i\theta}\langle Ix^{\theta}_n, x^{\theta}_n\rangle}\langle Tx^{\theta}_n, x^{\theta}_n\rangle\}\\
 &=\lim_{n\to \infty}{\rm  Re}\{e^{-i\varphi}\langle Tx^{\varphi}_n, x^{\varphi}_n\rangle  \overline{\langle Ix^{\varphi}_n, x^{\varphi}_n\rangle}\}\geq  -\varepsilon \omega (T),
\end{align*}
whence  $I\perp^{\varepsilon}_{\omega}T$.
\end{proof}
\end{corollary}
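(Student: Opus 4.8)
The plan is to derive the conclusion entirely from the characterization in Theorem~\ref{*}, exploiting the special structure of the identity operator. The crucial observations are that $\omega(I)=1$ and that $\langle Ix,x\rangle=\|x\|^2=1$ for every unit vector $x$. As a consequence, condition (i) of Theorem~\ref{*} applied to the pair $(I,T)$ --- namely $\lim_{n\to\infty}|\langle Ix_n,x_n\rangle|=\omega(I)=1$ --- holds automatically for \emph{any} sequence of unit vectors. Thus the whole burden of establishing $I\perp^{\varepsilon}_{\omega}T$ collapses onto producing, for each target angle $\theta$, a sequence satisfying condition (ii).

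First I would unpack the hypothesis $T\perp^{\varepsilon}_{\omega}I$ via Theorem~\ref{*}: for every angle $\varphi\in[0,2\pi)$ there is a sequence $\{x^{\varphi}_n\}$ of unit vectors with $\lim_{n}|\langle Tx^{\varphi}_n,x^{\varphi}_n\rangle|=\omega(T)$ and, after substituting $\langle Ix^{\varphi}_n,x^{\varphi}_n\rangle=1$, with $\lim_{n}{\rm Re}\{e^{-i\varphi}\langle Tx^{\varphi}_n,x^{\varphi}_n\rangle\}\ge-\varepsilon\,\omega(T)$. Next, given a desired angle $\theta\in[0,2\pi)$, I would make the reflection $\varphi=2\pi-\theta$, so that $e^{-i\varphi}=e^{i\theta}$, and set $x^{\theta}_n:=x^{\varphi}_n$. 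Condition (ii) for the pair $(I,T)$ requires a lower bound on ${\rm Re}\{e^{-i\theta}\langle Ix^{\theta}_n,x^{\theta}_n\rangle\overline{\langle Tx^{\theta}_n,x^{\theta}_n\rangle}\}={\rm Re}\{e^{-i\theta}\overline{\langle Tx^{\theta}_n,x^{\theta}_n\rangle}\}$.

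The key algebraic identity is that the real part of a complex number equals the real part of its conjugate. Applying it gives ${\rm Re}\{e^{-i\theta}\overline{\langle Tx^{\theta}_n,x^{\theta}_n\rangle}\}={\rm Re}\{e^{i\theta}\langle Tx^{\theta}_n,x^{\theta}_n\rangle\}$, which is precisely the quantity controlled by the hypothesis at angle $\varphi$. Passing to the limit then yields the required bound $\ge-\varepsilon\,\omega(T)=-\varepsilon\,\omega(I)\omega(T)$, and since condition (i) is free, Theorem~\ref{*} delivers $I\perp^{\varepsilon}_{\omega}T$.

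The argument is essentially routine; the only point demanding care --- and the step I expect to be the main (if minor) obstacle --- is the phase bookkeeping. One must verify that the reflection $\varphi=2\pi-\theta$ correctly converts $e^{-i\varphi}$ into $e^{i\theta}$, and that the conjugation identity realigns the conjugated inner product appearing in the hypothesis with the unconjugated form demanded by the characterization. Once these phase factors are matched, both conditions of Theorem~\ref{*} are in place and the asymmetry-restoring implication follows.
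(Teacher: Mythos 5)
Your proposal is correct and follows essentially the same route as the paper's own proof: the reflection $\varphi=2\pi-\theta$, the reuse of the sequence $x^{\theta}_n:=x^{\varphi}_n$, the identity $\mathrm{Re}(z)=\mathrm{Re}(\bar z)$ to flip the phase, and the observation that condition (i) is automatic for $I$ since $\langle Ix,x\rangle=1$ on unit vectors. The only difference is cosmetic --- you substitute $\langle Ix_n,x_n\rangle=1$ at the outset, which the paper carries along symbolically --- so nothing of substance distinguishes the two arguments.
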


Given an operator $T$, the set of all sequences in the closed unit ball $\mathscr{H}$ at which $T$ attains its numerical radius in limits is denoted by
$$M^*_{\omega(T)}=\{\{x_n\}:\,\,\,\|x_n\|=1, \lim_{n\to \infty}|\langle Tx_n, x_n\rangle|=\omega(T)\}.$$

 In the following result, we show that under some mild conditions, $\perp^{\varepsilon}_{\omega}$ behaves like a symmetric relation. Recall that the Crawford number of an operator $T$ is defined by
 $$c(T)=\inf \{|\langle Tx,x\rangle|:\,\|x\|=1\}.$$
\begin{proposition}
Let $c(T)\neq 0$.
 If $T\perp^{\varepsilon}_{\omega} S$ and  $M^*_{\omega(S)}\cap M^*_{\omega(T+\lambda S)}\neq \emptyset$ for all $\lambda\in\mathbb{C}$, then $S\perp^{\varepsilon}_{\omega} T$.
\end{proposition}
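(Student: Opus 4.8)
The plan is to verify, for the reversed relation $S\perp^{\varepsilon}_{\omega}T$, the sequential characterization of Theorem~\ref{*}: for each fixed $\theta\in[0,2\pi)$ I must produce a sequence $\{y_m\}$ of unit vectors with $\lim_m|\langle Sy_m,y_m\rangle|=\omega(S)$ and $\lim_m\mathrm{Re}\{e^{-i\theta}\langle Sy_m,y_m\rangle\overline{\langle Ty_m,y_m\rangle}\}\ge-\varepsilon\omega(S)\omega(T)$. The difficulty, which separates this from Corollary~\ref{22} (where $S=I$ attains its numerical radius at \emph{every} unit vector so that condition~(i) is automatic), is that one single sequence must simultaneously attain $\omega(S)$ and carry the correct sign information in the cross term. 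This is exactly what the hypothesis $M^*_{\omega(S)}\cap M^*_{\omega(T+\lambda S)}\neq\emptyset$ is designed to furnish.

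I would fix $\theta$ and, for each $m\in\mathbb{N}$, set $\lambda_m=\frac1m e^{-i\theta}$. Using the hypothesis I would choose $\{z^{(m)}_n\}_n\in M^*_{\omega(S)}\cap M^*_{\omega(T+\lambda_m S)}$, and after passing to a subsequence in $n$ I may assume $\langle Tz^{(m)}_n,z^{(m)}_n\rangle\to a_m$ and $\langle Sz^{(m)}_n,z^{(m)}_n\rangle\to b_m$. Membership in the two sets then gives $|b_m|=\omega(S)$, $|a_m|\le\omega(T)$, and $|a_m+\lambda_m b_m|=\omega(T+\lambda_m S)$. I would then feed this into $T\perp^{\varepsilon}_{\omega}S$: the definition yields $\omega^2(T+\lambda_m S)\ge\omega^2(T)-2\varepsilon|\lambda_m|\omega(T)\omega(S)$, so expanding $|a_m+\lambda_m b_m|^2=|a_m|^2+\frac{2}{m}\,\mathrm{Re}\{e^{-i\theta}b_m\overline{a_m}\}+\frac{1}{m^2}\omega^2(S)$ and discarding the nonnegative term $\omega^2(T)-|a_m|^2$ leaves
\[
\mathrm{Re}\{e^{-i\theta}b_m\overline{a_m}\}\ \ge\ -\tfrac{1}{2m}\omega^2(S)-\varepsilon\omega(T)\omega(S).
\]

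The remaining step is a diagonal extraction. For each $m$ I would pick $n_m$ so large that $y_m:=z^{(m)}_{n_m}$ satisfies $\big||\langle Sy_m,y_m\rangle|-\omega(S)\big|<\frac1m$ and $\mathrm{Re}\{e^{-i\theta}\langle Sy_m,y_m\rangle\overline{\langle Ty_m,y_m\rangle}\}\ge\mathrm{Re}\{e^{-i\theta}b_m\overline{a_m}\}-\frac1m$, both possible because the two inner products converge to $b_m$ and $a_m$ along $n$. Then $\{y_m\}$ is a sequence of unit vectors with $|\langle Sy_m,y_m\rangle|\to\omega(S)$, and combining the last display with the choice of $n_m$ gives $\liminf_m\mathrm{Re}\{e^{-i\theta}\langle Sy_m,y_m\rangle\overline{\langle Ty_m,y_m\rangle}\}\ge-\varepsilon\omega(S)\omega(T)$; passing to one further subsequence along which this bounded real sequence converges produces the required limit, and Theorem~\ref{*} then yields $S\perp^{\varepsilon}_{\omega}T$.

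I expect the main obstacle to be precisely this coupling of the two limit conditions in a single sequence: neither $T\perp^{\varepsilon}_{\omega}S$ alone nor the conjugation symmetry exploited in Corollary~\ref{22} forces the sequence realizing the cross-term estimate to also attain $\omega(S)$, and it is the intersection hypothesis together with the passage $\lambda_m\to0$ (which annihilates the spurious $\frac{1}{m^2}\omega^2(S)$ term) that resolves it. The assumption $c(T)\neq0$ enters only to keep the situation non-degenerate, since it guarantees $\omega(T)\ge c(T)>0$ and hence $T\neq0$; after disposing of the trivial case $S=0$ one may then, if desired, invoke the Remark to normalize $\omega(T)=\omega(S)=1$ throughout.
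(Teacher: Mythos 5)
Your proof is correct, and it takes a genuinely different route from the paper's. The paper never invokes Theorem \ref{*} here: it verifies the definition of $S\perp^{\varepsilon}_{\omega}T$ directly for every $\lambda\in\mathbb{C}$, by scaling $T$ to $\beta T$ with $\beta=\omega(S)/c(T)$, choosing $\{x_n\}\in M^*_{\omega(S)}\cap M^*_{\omega(T+\frac{\overline{\lambda}}{\beta}S)}$, and playing the expansion of $|\langle(\beta T+\overline{\lambda}S)x_n,x_n\rangle|^2$ (bounded below via $\beta T\perp^{\varepsilon}_{\omega}S$ and homogeneity) against the expansion of $|\langle(S+\lambda\beta T)x_n,x_n\rangle|^2$; the conjugation on $\lambda$ makes the two expansions share the same cross term, and the quadratic terms are swapped using the pointwise bound $\beta|\langle Tx_n,x_n\rangle|\geq\beta c(T)=\omega(S)\geq|\langle Sx_n,x_n\rangle|$. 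That swap is exactly where $c(T)\neq 0$ is indispensable to the paper's argument. You instead route everything through the sequential characterization of Theorem \ref{*} in both directions, with the phase-matched parameters $\lambda_m=\frac{1}{m}e^{-i\theta}$ (the same conjugation trick that drives Corollary \ref{22}); since $\lambda_m\to 0$, the quadratic term $\frac{1}{m^2}\omega^2(S)$ is killed in the limit, so no comparison between $|\langle Tx_n,x_n\rangle|$ and $|\langle Sx_n,x_n\rangle|$ is ever needed, only the trivial bound $|a_m|\leq\omega(T)$; a diagonal extraction then assembles the sequence that Theorem \ref{*} demands. What your route buys: the hypothesis $c(T)\neq 0$ is never used (your closing remark is accurate, and even the degenerate cases $T=0$ or $S=0$ are trivial), so you have in fact proved the proposition without the Crawford-number assumption, and you use the intersection hypothesis only for the countably many small values $\lambda_m$ on each ray rather than at a point $\overline{\lambda}/\beta$ for every $\lambda\in\mathbb{C}$. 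What the paper's route buys: it stays at the level of the definition, handles all $\lambda$ at once with a single sequence, and avoids the diagonal argument. The one delicate point in your version--that condition (i) of Theorem \ref{*} must survive the final passage to a subsequence making the cross terms converge--is handled correctly, since (i) holds for the full sequence $\{y_m\}$ and hence for every subsequence.
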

\begin{proof}
Let $\lambda\in\mathbb{C}$. Put $\beta:=\dfrac{\omega(S)}{c(T) }$. Since $\perp^{\varepsilon}_{\omega}$ is homogeneous, we have
$\beta T\perp^{\varepsilon}_{\omega} S$. Hence
$\omega^2(\beta T+\overline{\lambda} S)\geqslant \omega^2(\beta T)-2 \varepsilon \omega(\beta T) \omega( \overline{\lambda} S)$. Let $\{x_n\}\in M^*_{\omega(S)}\cap M^*_{\omega(T+\frac{\overline{\lambda}}{\beta} S)}$. We have
\begin{align*}
 \omega^2(\beta T)&-2 \varepsilon \omega(\beta T) \omega( \overline{\lambda} S)\\
 &\leq\omega^2(\beta T+\overline{\lambda} S)\\
& =\lim_{n\to\infty}|\langle\beta T+\overline{\lambda} Sx_n,x_n\rangle|^2\\
&=\lim_{n\to\infty}\Big(|\langle\beta Tx_n,x_n\rangle|^2+2{\rm Re}  \lambda\langle \beta Tx_n,x_n\rangle\overline{\langle Sx_n,x_n\rangle}+|\lambda|^2|\langle Sx_n,x_n\rangle|^2 \Big).
\end{align*}
From $\lim_{n\to\infty}|\langle\beta Tx_n,x_n\rangle|^2 \leq \omega^2(\beta T)$,
we infer that 
\begin{align}\label{amy5}
\lim_{n\to\infty}\Big(2{\rm Re}  \lambda\langle \beta Tx_n,x_n\rangle\overline{\langle Sx_n,x_n\rangle}+|\lambda|^2|\langle Sx_n,x_n\rangle|^2 \Big)\geq -2 \varepsilon \omega(\beta T) \omega( \overline{\lambda} S).
\end{align}
From $\beta=\dfrac{\omega(S)}{c(T) }$, we conclude that $\lim_{n\to\infty}(\beta^2|\langle  Tx_n,x_n\rangle|^2-|\langle Sx_n,x_n\rangle|^2)\geq 0$.
Thus
\begin{align*}
\omega^2(S+\lambda \beta T) &\geq \lim_{n\to\infty}|\langle ((S+\lambda \beta  T)x_n,x_n\rangle|^2\\
&= \lim_{n\to\infty} \Big(|\langle Sx_n,x_n\rangle|^2+2{\rm Re}\lambda\langle \beta Tx_n,x_n\rangle\overline{\langle Sx_n,x_n\rangle}+
|\lambda|^2|\langle\beta Tx_n, x_n\rangle|^2\Big)\\
&\geq \lim_{n\to\infty} \Big(|\langle Sx_n,x_n\rangle|^2+2{\rm Re} \lambda\langle \beta Tx_n,x_n\rangle\overline{\langle Sx_n,x_n\rangle}+|\lambda|^2|\langle Sx_n,x_n\rangle|^2\Big)\\
&\geq \lim_{n\to\infty} |\langle Sx_n,x_n\rangle |^2-2 \varepsilon \omega(\beta T) \omega(\overline{\lambda} S) \hspace{3cm}({\rm by~} \eqref{amy5}).
\end{align*}
It follows from the assumption that $\lim_{n\to\infty} |\langle Sx_n,x_n\rangle|=\omega(S)$. Therefore
 $$\omega^2(S+\lambda \beta T)\geq \omega^2(S)-2 \varepsilon \omega(S) \omega( \beta\lambda T),$$
since $\omega(\mu S)=|\mu|\omega(S)$ for each $\mu \in\mathbb{C}$. Thus $S\perp^{\varepsilon}_{\omega} T$.
\end{proof}
For compact operators, in particular in the case where $\mathscr{H}$ is finite dimensional, Theorem \ref{*} yields the following result.

\begin{theorem}\label{**}
Let $T$ and $S$ be two compact operators. Then $T\perp^{\varepsilon}_{\omega} S$ holds
if and only if for every $\theta \in [0, 2\pi)$, there exists a unit vector $x^{\theta}\in \mathscr{H}$  such that
$|\langle Tx^{\theta}, x^{\theta}\rangle|=\omega(T)$ and
$ {\rm  Re}\{e^{-i\theta} \langle Tx^{\theta}, x^{\theta}\rangle\overline{\langle Sx^{\theta}, x^{\theta}\rangle}\} \geq -\varepsilon \omega(T) \omega(S)$.
\end{theorem}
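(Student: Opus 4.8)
The plan is to deduce Theorem \ref{**} directly from Theorem \ref{*}, using compactness to upgrade the attaining \emph{sequences} into single attaining \emph{vectors}. The backward implication is immediate: if for each $\theta$ there is a unit vector $x^\theta$ with $|\langle Tx^\theta,x^\theta\rangle|=\omega(T)$ and $\mathrm{Re}\{e^{-i\theta}\langle Tx^\theta,x^\theta\rangle\overline{\langle Sx^\theta,x^\theta\rangle}\}\geq-\varepsilon\omega(T)\omega(S)$, then the constant sequence $x_n^\theta:=x^\theta$ satisfies conditions (i) and (ii) of Theorem \ref{*}, whence $T\perp^\varepsilon_\omega S$. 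So the whole content lies in the forward implication.

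For the forward direction, I would first invoke Theorem \ref{*} to obtain, for each fixed $\theta\in[0,2\pi)$, a sequence $\{x_n^\theta\}$ of unit vectors satisfying conditions (i) and (ii) of that theorem. Since the closed unit ball of a Hilbert space is weakly compact, I would pass to a subsequence (which I do not relabel) along which $x_n^\theta\rightharpoonup x^\theta$ weakly for some $x^\theta$ with $\|x^\theta\|\leq1$. The decisive use of compactness is that $Tx_n^\theta\to Tx^\theta$ and $Sx_n^\theta\to Sx^\theta$ in norm; combined with $x_n^\theta\rightharpoonup x^\theta$, the splitting $\langle Tx_n^\theta,x_n^\theta\rangle=\langle Tx_n^\theta-Tx^\theta,x_n^\theta\rangle+\langle Tx^\theta,x_n^\theta\rangle$ together with the bound $|\langle Tx_n^\theta-Tx^\theta,x_n^\theta\rangle|\leq\|Tx_n^\theta-Tx^\theta\|$ shows that $\langle Tx_n^\theta,x_n^\theta\rangle\to\langle Tx^\theta,x^\theta\rangle$, and likewise $\langle Sx_n^\theta,x_n^\theta\rangle\to\langle Sx^\theta,x^\theta\rangle$. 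Passing to the limit in (i) and (ii) then yields $|\langle Tx^\theta,x^\theta\rangle|=\omega(T)$ and $\mathrm{Re}\{e^{-i\theta}\langle Tx^\theta,x^\theta\rangle\overline{\langle Sx^\theta,x^\theta\rangle}\}\geq-\varepsilon\omega(T)\omega(S)$.

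The main obstacle is to verify that the weak limit $x^\theta$ is genuinely a \emph{unit} vector rather than merely an element of the unit ball, since weak limits only satisfy $\|x^\theta\|\leq1$. I would settle this with a normalisation argument, treating $T=0$ (equivalently $\omega(T)=0$) separately, in which case every unit vector works trivially. Assuming $\omega(T)>0$, the identity $|\langle Tx^\theta,x^\theta\rangle|=\omega(T)>0$ forces $x^\theta\neq0$; writing $c:=\|x^\theta\|\in(0,1]$ and $\hat x:=x^\theta/c$, the definition of $\omega(T)$ as a supremum over unit vectors gives $\omega(T)\geq|\langle T\hat x,\hat x\rangle|=\omega(T)/c^2$, forcing $c=1$. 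Thus $x^\theta$ is a unit vector satisfying both required conditions, completing the forward implication. In the finite-dimensional case this step is even simpler, as the unit sphere is norm-compact and one may extract a norm-convergent subsequence directly, with continuity of $x\mapsto\langle Tx,x\rangle$ and $x\mapsto\langle Sx,x\rangle$ doing the rest.
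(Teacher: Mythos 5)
Your proof is correct and follows essentially the same route as the paper's: both directions rest on Theorem \ref{*}, and the forward direction extracts a weakly convergent subsequence from the attaining sequence and uses compactness of $T$ and $S$ to turn weak convergence of $\{x_n^{\theta}\}$ into norm convergence of $\{Tx_n^{\theta}\}$ and $\{Sx_n^{\theta}\}$, so that the inner products pass to the limit. The one substantive difference is your normalisation step: the paper's own proof stops at the weak limit and never verifies that $x^{\theta}$ is a \emph{unit} vector (weak limits of unit vectors need only satisfy $\|x^{\theta}\|\leq 1$), so your argument that $\omega(T)>0$ forces $c:=\|x^{\theta}\|=1$ via $\omega(T)\geq|\langle T\hat{x},\hat{x}\rangle|=\omega(T)/c^{2}$, with the case $\omega(T)=0$ handled trivially, actually closes a small gap that the paper leaves open.
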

\begin{proof}
($\Longleftarrow$) It is obvious by Theorem \ref{*}.

($\Longrightarrow$)  Let $\theta \in [0, 2\pi)$. It follows from Theorem \ref{*} that there exists a sequence $\{x^{\theta}_n\}_{n\in\mathbb{N}}$ of unit vectors in $\mathscr{H}$ such that both (i) $\displaystyle\lim_{n\to \infty} |\langle Tx^{\theta}_n, x^{\theta}_n\rangle|=\omega(T)$ and (ii) $\displaystyle\lim_{n\to \infty} {\rm  Re}\{e^{-i\theta} \langle Tx^{\theta}_n, x^{\theta}_n\rangle\overline{\langle Sx^{\theta}_n, x^{\theta}_n\rangle}\}\geq -\varepsilon \omega(T) \omega(S)$ hold.

Since the closed unit ball of $\mathscr{H}$ is weakly compact, $\{x^{\theta}_n\}$ has a weakly convergent subsequence. Without loss of generality, we assume that $\{x^{\theta}_n\}$ is weakly converges, say to $x^{\theta}$. H ence,  $\langle x^{\theta}_n-x^{\theta},T^*y\rangle \to 0$ as $n \to \infty$ for all $y\in \mathscr{H}$. Therefore $\{Tx^{\theta}_n\}$ weakly converges to  $Tx^{\theta}$. Similarly $\{Sx^{\theta}_n\}$ weakly converges to  $Sx^{\theta}$. 

On the other hand, since $\{x^{\theta}_n\}$ is norm-bounded and the operators $T$ and $S$ are compact, by passing to subsequences, we can assume that $\{Tx^{\theta}_n\}$ and $\{Sx^{\theta}_n\}$ are norm-convergent. Thus, $\lim_{n\to \infty}  Tx^{\theta}_n=Tx^{\theta}$ and $\lim_{n\to \infty}  Sx^{\theta}_n=Sx^{\theta}$ in the norm topology.
Therefore, $\lim_{n\to \infty} \langle Tx^{\theta}_n, x^{\theta}_n\rangle=\langle Tx^{\theta}, x^{\theta}\rangle$ and $\lim_{n\to \infty} \langle Sx^{\theta}_n, x^{\theta}_n\rangle=\langle Sx^{\theta}, x^{\theta}\rangle$. Now by considering (i) and (ii), the proof is completed.
\end{proof}
\begin{example}
Suppose that $x, y \in\mathscr{H}$ are unit vectors and $x\otimes y$ denotes the rank one operator defined by $(x\otimes y)(z) : = \langle z, y\rangle x,\,\, z\in\mathscr{H}$.
It is easy to see that $\omega(x\otimes y) = \frac{1}{2}\left(|\langle x, y\rangle| + \|x\otimes y\|\right)$ for all $x, y\in\mathscr{H}$. Hence, for the compact operator $x\otimes x$, we get $\omega(x\otimes x)=\|x\|^2$.
 From Theorem \ref{**}, $x\otimes x \perp^{\varepsilon}_{\omega} y\otimes y$
if and only if for every $\theta \in [0, 2\pi)$, there exists  a unit vector $x^{\theta}\in \mathscr{H}$ such that
$1=|\langle x^{\theta}, x\rangle|$ and
$ \cos\theta\,|\langle x^{\theta}, x\rangle|^2|\langle x^{\theta}, y\rangle|^2\geq -\varepsilon $. 

From the equality case in the Cauchy--Schwarz inequality and $1=|\langle x^{\theta}, x\rangle|$ we infer that $x^{\theta}=x$. If $x$ and $y$ are orthogonal, we arrive at $x\otimes x \perp^{\varepsilon}_{\omega} y\otimes y$.

Moreover, if $\varepsilon>0$ is given and take the unit vectors $x_\varepsilon, y_\varepsilon\in\mathscr{H}$ such that 
$\varepsilon < |\langle x_\varepsilon ,y_\varepsilon \rangle |^2$, and take $\theta_\varepsilon \in[0, \pi)$ such that $-1<\cos\theta_\varepsilon<\frac{-\varepsilon}{|\langle x_\varepsilon,y_\varepsilon\rangle|^2}$, then the inequality $\cos\theta_\varepsilon |\langle x_\varepsilon,y_\varepsilon\rangle|^2 <-\varepsilon$ ensures that $x\otimes x \not\perp^{\varepsilon}_{\omega} y\otimes y$.

\end{example}
\begin{proposition}
If  $T$ is a positive operator and $T\perp^{\varepsilon}_{\omega}S$, then $(T+I)\perp^{\varepsilon}_{\omega}S$.
\end{proposition}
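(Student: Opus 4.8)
The plan is to work through the sequential characterization of Theorem~\ref{*} rather than the definition directly, since $\omega^2((T+I)+\lambda S)$ bears no convenient relation to $\omega^2(T+\lambda S)$. Throughout I exploit that a positive operator is self-adjoint, so $\langle Tx,x\rangle$ is a nonnegative real number for every unit vector $x$; consequently $\omega(T)=\|T\|$ and, since $\langle(T+I)x,x\rangle=\langle Tx,x\rangle+1>0$, also $\omega(T+I)=\sup_{\|x\|=1}(\langle Tx,x\rangle+1)=\omega(T)+1$. I will assume $T\neq 0$, i.e. $\omega(T)>0$; this is genuinely needed (it is automatic if ``positive'' is read as positive definite), and the division step below breaks down otherwise.

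Fix $\theta\in[0,2\pi)$. By Theorem~\ref{*} applied to $T\perp^{\varepsilon}_{\omega}S$, choose a sequence $\{x^{\theta}_n\}$ of unit vectors satisfying (i) and (ii). After passing to a subsequence I may assume that $\langle Tx^{\theta}_n,x^{\theta}_n\rangle$ and $\langle Sx^{\theta}_n,x^{\theta}_n\rangle$ both converge; write $t_n=\langle Tx^{\theta}_n,x^{\theta}_n\rangle\in[0,\infty)$, so $t_n\to\omega(T)$, and $r_n={\rm Re}\{e^{-i\theta}\overline{\langle Sx^{\theta}_n,x^{\theta}_n\rangle}\}$, so $r_n\to r$ for some $r\in[-\omega(S),\omega(S)]$. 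The same sequence $\{x^{\theta}_n\}$ is my candidate witness for $(T+I)\perp^{\varepsilon}_{\omega}S$. Condition (i) for $T+I$ is immediate, since $|\langle(T+I)x^{\theta}_n,x^{\theta}_n\rangle|=t_n+1\to\omega(T)+1=\omega(T+I)$.

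The crux is condition (ii). Because $t_n$ is real, the positive diagonal term factors out of the real part: the cross term for $T$ equals ${\rm Re}\{e^{-i\theta}\langle Tx^{\theta}_n,x^{\theta}_n\rangle\overline{\langle Sx^{\theta}_n,x^{\theta}_n\rangle}\}=t_nr_n\to\omega(T)\,r$, so (ii) for $T$ reads $\omega(T)\,r\geq-\varepsilon\,\omega(T)\omega(S)$. Dividing by $\omega(T)>0$ yields $r\geq-\varepsilon\,\omega(S)$. Likewise the cross term for $T+I$ equals $(t_n+1)r_n\to(\omega(T)+1)r$, and multiplying the bound $r\geq-\varepsilon\,\omega(S)$ by the positive number $\omega(T)+1$ gives $(\omega(T)+1)r\geq-\varepsilon(\omega(T)+1)\omega(S)=-\varepsilon\,\omega(T+I)\,\omega(S)$. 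Thus $\{x^{\theta}_n\}$ satisfies (i) and (ii) for $T+I$, and since $\theta$ was arbitrary, Theorem~\ref{*} yields $(T+I)\perp^{\varepsilon}_{\omega}S$.

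The main obstacle is spotting that the naive estimate — bounding the extra term ${\rm Re}\{e^{-i\theta}\overline{\langle Sx^{\theta}_n,x^{\theta}_n\rangle}\}$ crudely by $-\omega(S)$ — loses a factor and fails, because $-\omega(S)<-\varepsilon\,\omega(S)$. The correct move is to invoke condition (ii) for $T$ a second time: dividing out the real positive scalar $t_n$ upgrades (ii) to the sharper bound $r\geq-\varepsilon\,\omega(S)$, which is precisely what survives rescaling by $\omega(T)+1$. This is exactly where positivity of $T$ is indispensable — it makes $t_n$ real and positive so that it can be pulled outside the real part and then cancelled — and where the hypothesis $\omega(T)>0$ enters.
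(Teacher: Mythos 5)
Your proof is correct and follows essentially the same route as the paper's: both invoke Theorem \ref{*}, exploit positivity of $T$ to pull the real, nonnegative factor $\langle Tx^{\theta}_n,x^{\theta}_n\rangle$ outside the real part and divide condition (ii) by $\omega(T)$ to obtain the sharper bound $\lim_{n\to\infty}{\rm Re}\{e^{-i\theta}\overline{\langle Sx^{\theta}_n,x^{\theta}_n\rangle}\}\geq-\varepsilon\,\omega(S)$, and then rescale by $\omega(T+I)=\omega(T)+1$ to verify (ii) for $T+I$ with the same witnessing sequence. Your explicit flagging of the hypothesis $\omega(T)>0$ is a genuine point in your favor: the paper performs that same division tacitly, and the step (indeed the proposition itself, e.g.\ $T=0$, $S=I$, $\varepsilon<3/4$) fails when $\omega(T)=0$.
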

\begin{proof}
Let $\theta \in [0, 2\pi)$. By the assumption, there exists a sequence $\{x^{\theta}_n\}$ of unit vectors such that $$\lim_{n\to \infty} |\langle Tx^{\theta}_n, x^{\theta}_n\rangle|=\omega(T),\,\,\text{and}\,\,\,\lim_{n\to \infty}{\rm Re}\{e^{-i\theta}\langle  Tx^{\theta}_n, x^{\theta}_n\rangle\overline{\langle Sx^{\theta}_n, x^{\theta}_n\rangle}\}
\geq -\varepsilon \omega(T) \omega(S).$$
 Since $T$ is positive,  $\omega(T+I)=\omega(T)+1$ and $ \displaystyle\lim_{n\to \infty} Re \langle Tx^{\theta}_n, x^{\theta}_n \rangle= \displaystyle\lim_{n\to \infty} \langle Tx^{\theta}_n, x^{\theta}_n \rangle$.\\
 Hence,
 \[\lim	_{n\to \infty}{\rm Re}\{e^{-i\theta}\overline{\langle Sx^{\theta}_n, x^{\theta}_n\rangle}\}
\geq -\varepsilon \omega(S)\]
 and
\begin{align*}
\lim_{n\to \infty}|\langle (T+I) x_n^{\theta}, x_n^{\theta}\rangle|^2&=\lim_{n\to \infty}\left(|\langle T x_n^{\theta},x_n^{\theta}\rangle|^2+|\langle I x_n^{\theta},x_n^{\theta}\rangle|^2+2{\rm  Re} \langle Tx^{\theta}_n, x^{\theta}_n\rangle\right)\\
&=\omega^2(T)+1+2\omega(T)=\omega^2(T+I).
\end{align*}
Thus,
\begin{align*}
&\hspace{-1cm}\lim_{n\to\infty} {\rm  Re} \{e^{-i\theta}\langle (T+I) x_n^{\theta},x_n^{\theta}\rangle\overline{\langle S x_n^{\theta},x_n^{\theta}\rangle}\}\\
&=\lim_{n\to\infty}{\rm  Re}\{e^{-i\theta}\langle Tx_n^{\theta},x_n^{\theta}\rangle \overline{\langle Sx^{\theta}_n, x^{\theta}_n\rangle}\}+\lim_{n\to\infty} {\rm  Re} \{e^{-i\theta}\langle I x_n^{\theta},x_n^{\theta}\rangle\overline{\langle S x_n^{\theta},x_n^{\theta}\rangle}\}\\
&\geq -\varepsilon \omega(T) \omega(S)-\varepsilon \omega(S)\\
&\geq -\varepsilon\omega(S)\omega(T+I).
\end{align*}
Therefore, $(T+I)\perp ^\varepsilon_{\omega} S$.\\
\end{proof}

Our last result of this section reads as follows.
\begin{proposition}
Let $S$ and $K$ be positive operators of norm one, $K\leq S$, and 
$T\perp^{\varepsilon}_{\omega}S$. Then $T\perp^{2\varepsilon}_{\omega}S+K$.
\end{proposition}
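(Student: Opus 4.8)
The plan is to prove the statement entirely through the sequential characterization of Theorem \ref{*}, reusing \emph{the same} sequence that the hypothesis $T\perp^{\varepsilon}_{\omega}S$ already supplies. For each fixed $\theta$, I would take the sequence witnessing $T\perp^{\varepsilon}_{\omega}S$ and show that (after passing to a convergent subsequence) it also witnesses $T\perp^{2\varepsilon}_{\omega}(S+K)$; the doubling of the constant is exactly what is needed to absorb the extra term coming from $K$.

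First I would record the elementary facts about the operators. Since $S$ and $K$ are positive of norm one, $\omega(S)=\|S\|=1$ and $\omega(K)=\|K\|=1$; moreover $S+K$ is positive, so $\omega(S+K)=\|S+K\|$, and from $0\le S\le S+K$ one gets $\|S+K\|\ge\|S\|=1$, i.e. $\omega(S+K)\ge 1$. Positivity also guarantees that for any unit vector $x$ the numbers $\langle Sx,x\rangle$ and $\langle Kx,x\rangle$ are real and nonnegative, and $K\le S$ gives $0\le\langle Kx,x\rangle\le\langle Sx,x\rangle$. These reality statements are what allow me to pull $\overline{\langle (S+K)x,x\rangle}$ out of the real part in condition (ii) of Theorem \ref{*}.

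Next, I would fix $\theta\in[0,2\pi)$ and take the sequence $\{x^{\theta}_n\}$ from Theorem \ref{*} applied to $T\perp^{\varepsilon}_{\omega}S$. Abbreviating $s_n=\langle Sx^{\theta}_n,x^{\theta}_n\rangle$, $k_n=\langle Kx^{\theta}_n,x^{\theta}_n\rangle$, and $a_n={\rm Re}\{e^{-i\theta}\langle Tx^{\theta}_n,x^{\theta}_n\rangle\}$, the reality of $s_n$ together with $\omega(S)=1$ rewrites condition (ii) as $\lim_n s_n a_n\ge-\varepsilon\omega(T)$, while condition (i) says $|\langle Tx^{\theta}_n,x^{\theta}_n\rangle|\to\omega(T)$. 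Since $\{s_n\},\{k_n\}\subset[0,1]$ and $\{a_n\}\subset[-\omega(T),\omega(T)]$ are bounded, I pass to a subsequence along which $s_n\to s$, $k_n\to k$, $a_n\to a$; this subsequence still satisfies (i), and I take it as my witnessing sequence. After the same factoring, the target inequality becomes $(s+k)a\ge-2\varepsilon\omega(T)\omega(S+K)$.

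The \emph{crux} is the elementary bound on $(s+k)a$ under the constraints $0\le k\le s$ and $sa\ge-\varepsilon\omega(T)$, and this is where the main subtlety lies, since the sign of $a$ determines the direction of the inequality $k\le s$ when multiplied by $a$. I would split cases: if $a\ge 0$ then $ka\ge 0$, so $(s+k)a\ge sa\ge-\varepsilon\omega(T)$; if $a<0$ then $k\le s$ reverses to $ka\ge sa$, giving $(s+k)a=sa+ka\ge 2sa\ge-2\varepsilon\omega(T)$. In either case $(s+k)a\ge-2\varepsilon\omega(T)$, and since $\omega(S+K)\ge 1$ this dominates $-2\varepsilon\omega(T)\omega(S+K)$. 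This establishes condition (ii) with constant $2\varepsilon$ for $S+K$, while condition (i) is inherited unchanged, so the converse direction of Theorem \ref{*} yields $T\perp^{2\varepsilon}_{\omega}(S+K)$. I expect the only genuine obstacle to be the sign handling in the second case, since that is precisely the mechanism forcing the factor $2$ rather than $1$; the remainder is routine bookkeeping with convergent subsequences and the normalizations $\omega(S)=\omega(K)=1$, $\omega(S+K)\ge 1$.
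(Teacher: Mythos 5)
Your proof is correct, and its skeleton is the same as the paper's: fix $\theta$, take the sequence from Theorem \ref{*} witnessing $T\perp^{\varepsilon}_{\omega}S$, use positivity of $S$ and $K$ to replace $\overline{\langle Sx^{\theta}_n, x^{\theta}_n\rangle}$ and $\overline{\langle Kx^{\theta}_n, x^{\theta}_n\rangle}$ by the nonnegative reals $s_n=\langle Sx^{\theta}_n, x^{\theta}_n\rangle$ and $k_n=\langle Kx^{\theta}_n, x^{\theta}_n\rangle$, bound the $K$-contribution by the $S$-contribution via $K\leq S$ (this is exactly where the factor $2$ enters), and finish by comparing $\omega(S),\omega(K)$ with $\omega(S+K)$. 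Where you diverge is at the crux. Writing $a_n={\rm Re}\{e^{-i\theta}\langle Tx^{\theta}_n, x^{\theta}_n\rangle\}$, the paper declares ``we may assume $\lim_n s_n>0$'' and then divides: from $\lim_n s_na_n\geq -\varepsilon\omega(T)$ it gets $\lim_n a_n\geq -\varepsilon\omega(T)/\lim_n s_n$, multiplies by $\lim_n k_n\leq \lim_n s_n$ to obtain $\lim_n k_na_n\geq-\varepsilon\omega(T)$, and adds the two bounds. Your sign-split on $a=\lim_n a_n$ (if $a\geq 0$ then $ka\geq 0$; if $a<0$ then $k\leq s$ gives $ka\geq sa$) reaches the same inequality $(s+k)a\geq-2\varepsilon\omega(T)$ with no division, so the degenerate case $s=0$ --- which the paper waves away without justification, though it is in fact harmless since $0\leq k_n\leq s_n\to 0$ forces the whole limit to be $0$ --- needs no separate treatment in your version. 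Your explicit passage to convergent subsequences also supplies the existence of the limits $\lim_n k_n$ and $\lim_n a_n$, which the paper's computation tacitly assumes. In short, both arguments prove the proposition by the same mechanism; yours is marginally more careful in execution, the paper's is marginally shorter.
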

\begin{proof}
Let $\theta \in [0, 2\pi)$. There exists a sequence $\{x^{\theta}_n\}_{n\in\mathbb{N}}$ of unit vectors in $\mathscr{H}$ such that $\displaystyle\lim_{n\to \infty} |\langle Tx^{\theta}_n, x^{\theta}_n\rangle| =\omega(T)$ and $\displaystyle \lim_{n\to \infty}\langle Sx^{\theta}_n, x^{\theta}_n\rangle{\rm  Re}\{e^{-i\theta} \langle Tx^{\theta}_n, x^{\theta}_n\rangle\}\geq -\varepsilon \omega(T)$ hold.
We may assume that $\lim_{n\to \infty}\langle Sx^{\theta}_n, x^{\theta}_n\rangle>0$. We have
\begin{align*}
\lim_{n\to\infty} {\rm  Re}\{e^{-i\theta} \langle Tx^{\theta}_n, x^{\theta}_n\rangle\langle Kx^{\theta}_n, x^{\theta}_n\rangle\}&=
\lim_{n\to\infty} \langle Kx^{\theta}_n, x^{\theta}_n\rangle\lim_{n\to\infty} {\rm  Re}\{e^{-i\theta} \langle Tx^{\theta}_n, x^{\theta}_n\rangle\}\\
&\geq \lim_{n\to\infty} \langle Kx^{\theta}_n, x^{\theta}_n\rangle\frac{-\varepsilon \omega(T)}{\lim_{n\to \infty}\langle Sx^{\theta}_n, x^{\theta}_n\rangle}\\
&\geq -\varepsilon \omega(T)=-\varepsilon \omega(T)\omega(K).
\end{align*}
Hence,
\begin{align*}
 \lim_{n\to\infty}{\rm  Re}\{e^{-i\theta} \langle Tx^{\theta}_n, x^{\theta}_n\rangle\overline{(S+K)x^{\theta}_n, x^{\theta}_n\rangle}\}&=\lim_{n\to\infty} {\rm  Re}\{e^{-i\theta} \langle Tx^{\theta}_n, x^{\theta}_n\rangle\langle Sx^{\theta}_n, x^{\theta}_n\rangle\}\\
 &\quad+\lim_{n\to\infty} {\rm  Re}\{e^{-i\theta} \langle Tx^{\theta}_n, x^{\theta}_n\rangle\langle Kx^{\theta}_n, x^{\theta}_n\rangle\}\\
 &\geq -\varepsilon \omega(T)\omega(S)-\varepsilon \omega(T)\omega(K)\\
 &\geq-2\varepsilon \omega(T)\omega(S+K)\quad ({\rm as~} 0 \leq S, K\leq S+K).
 \end{align*}
Hence $T\perp^{2\varepsilon}_{\omega}S+K$.
\end{proof}


\section{Numerical radius derivation}
In this section, we introduce the notion of numerical radius derivation  and  provide a characterization of $\perp^{\varepsilon}_{\omega}$ by employing this notion.

Let $\theta \in [0, 2\pi)$.
The function $f:\mathbb{R}\to \mathbb{R}$ define by $f(r)=\omega^2(T+re^{i\theta}S)$ is convex.
To show this, let $r, s\in \mathbb{R}$ and $\alpha\in [0, 1]$. By the convexity of the real function $g(r)=r^2$, we have
\begin{align*}
f(\alpha r+(1-\alpha) s)&=\omega^2(T+ (\alpha r+(1-\alpha) s)e^{i\theta}S)\\
&=\omega^2(\alpha (T+re^{i\theta}S)+(1-\alpha)(T+se^{i\theta}S))\\
&\leq \Big(\alpha \omega((T+re^{i\theta}S)+(1-\alpha)\omega((T+se^{i\theta}S))\Big)^2\\
&\leq\alpha\omega^2(T+re^{i\theta}S)+(1-\alpha)\omega^2(T+se^{i\theta}S)\\
&=\alpha f(r)+(1-\alpha)f(s).
\end{align*}
Then for every $\theta \in [0, 2\pi)$ the function $D^{\theta}_{\omega}:\mathbb{B}(\mathscr{H})\times \mathbb{B}(\mathscr{H})\to \mathbb{R}$ defined by
\[D^{\theta}_{\omega}(T, S):=\lim_{r\to 0^+} \frac{\omega^2(T+re^{i\theta} S)-\omega^2(T)}{2r}\]
exists, we call it the numerical radius derivation ($\omega$-derivation).

Furthermore, the functions $f(r)=\omega^2(T+re^{i\theta}S)$ and $g(r)=2r\varepsilon \omega(T) \omega( S)$ are convex functions and so is $h(r)=\omega^2(T+re^{i\theta}S)+2r\varepsilon\omega(T) \omega( S)$.

The following theorem gives a characterization of the approximate numerical radius orthogonality for operators.
\begin{theorem}\label{31}
The relation $T\perp^{\varepsilon}_{\omega}S$ holds if and only if $\displaystyle\inf_{\theta \in [0, 2\pi)} D^{\theta}_{\omega}(T, S)
\geq -\varepsilon \omega(T) \omega(S)$.
\end{theorem}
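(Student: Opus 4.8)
The plan is to exploit the convexity of the real function $f_\theta(r)=\omega^2(T+re^{i\theta}S)$ established just before the statement, together with the elementary fact that a convex function on $\mathbb{R}$ always lies above the supporting line determined by its one-sided derivative at a point. The bridge between the two sides of the equivalence is the identification of $D^\theta_\omega(T,S)$ with one half of the right-hand derivative of $f_\theta$ at $0$: writing $f_\theta(0)=\omega^2(T)$, one has
$$D^\theta_\omega(T,S)=\lim_{r\to 0^+}\frac{f_\theta(r)-f_\theta(0)}{2r}=\tfrac12 (f_\theta)'_+(0).$$
I would record at the outset that for $\lambda\in\mathbb{C}$ with $\lambda=re^{i\theta}$, $r\geq 0$, one has $\omega(\lambda S)=r\,\omega(S)$, so the defining inequality of $T\perp^\varepsilon_\omega S$ reads $f_\theta(r)\geq \omega^2(T)-2r\varepsilon\omega(T)\omega(S)$.

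For the forward implication I would fix $\theta\in[0,2\pi)$ and feed $\lambda=re^{i\theta}$ (with $r>0$) into the hypothesis $T\perp^\varepsilon_\omega S$. This yields $f_\theta(r)-\omega^2(T)\geq -2r\varepsilon\omega(T)\omega(S)$, hence
$$\frac{f_\theta(r)-\omega^2(T)}{2r}\geq -\varepsilon\omega(T)\omega(S)$$
for every $r>0$. Letting $r\to 0^+$ gives $D^\theta_\omega(T,S)\geq -\varepsilon\omega(T)\omega(S)$, and since $\theta$ was arbitrary the same lower bound passes to the infimum.

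For the converse I would use that the difference quotient $r\mapsto (f_\theta(r)-f_\theta(0))/r$ of a convex function is nondecreasing on $(0,\infty)$, so it dominates its limit at $0^+$, namely $(f_\theta)'_+(0)=2D^\theta_\omega(T,S)$. This produces the supporting-line estimate
$$f_\theta(r)\geq \omega^2(T)+2r\,D^\theta_\omega(T,S)\qquad(r\geq 0).$$
Combining this with the standing hypothesis $D^\theta_\omega(T,S)\geq \inf_{\vartheta\in[0,2\pi)} D^\vartheta_\omega(T,S)\geq -\varepsilon\omega(T)\omega(S)$ gives $f_\theta(r)\geq \omega^2(T)-2r\varepsilon\omega(T)\omega(S)$. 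Since every $\lambda\in\mathbb{C}$ is of the form $re^{i\theta}$ with $r\geq 0$ (the case $\lambda=0$ being trivial) and $\omega(\lambda S)=r\omega(S)$, this is precisely $\omega^2(T+\lambda S)\geq \omega^2(T)-2\varepsilon\omega(T)\omega(\lambda S)$, i.e. $T\perp^\varepsilon_\omega S$.

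There is no serious obstacle once the convexity of $f_\theta$ (already proved) is in hand; the only points requiring care are the passage from the pointwise estimate in $\theta$ to the statement about the infimum, and the verification that the right derivative $(f_\theta)'_+(0)$ appearing in the supporting-line inequality genuinely equals $2D^\theta_\omega(T,S)$ rather than some other one-sided slope. I would therefore state explicitly the standard convex-analysis lemma (monotonicity of difference quotients, yielding the supporting line from the right derivative) and pin down the factor of $2$ in the definition of $D^\theta_\omega$ before assembling the two inequalities.
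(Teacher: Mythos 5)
Your proof is correct and takes essentially the same route as the paper: the forward direction divides the defining inequality by $2r$ and passes to the limit, and the converse uses the supporting-line property of a convex function (monotone difference quotients) to get from $D^{\theta}_{\omega}(T,S)\geq -\varepsilon\omega(T)\omega(S)$ back to the inequality for all $r\geq 0$. The only cosmetic difference is that the paper runs the convexity argument on the shifted function $h(r)=\omega^2(T+re^{i\theta}S)+2r\varepsilon\omega(T)\omega(S)$, showing $h'_+(0)\geq 0$ and hence $h(r)\geq h(0)$, whereas you apply the supporting line directly to $f_\theta$ and add the lower bound on $D^{\theta}_{\omega}(T,S)$ afterwards; the two are equivalent modulo that linear shift.
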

\begin{proof}
($\Longrightarrow$) Suppose that $\theta \in [0, 2\pi)$. It follows from $T\perp^{\varepsilon}_{\omega}S$ that
$\omega^2(T+re^{i\theta} S)\geq \omega^2(T)-2r\varepsilon \omega(T) \omega( S)$ for all $r\in\mathbb{R}^+$. We have
\begin{align*}
D^{\theta}_{\omega}(T, S)&=\lim_{r\to 0^+} \frac{\omega^2(T+re^{i\theta} S)-\omega^2(T)}{2r}\\
&=\lim_{r\to 0^+} \frac{\omega^2(T+re^{i\theta} S)-\omega^2(T)+ 2r\varepsilon \omega(T) \omega(S)}{2r}+\lim_{r\to 0^+} \frac{- 2r\varepsilon \omega(T) \omega(S)}{2r}\\
&=\lim_{r\to 0^+} \frac{\omega^2(T+re^{i\theta} S)-\omega^2(T)+ 2r\varepsilon \omega(T) \omega(S)}{2r}- \varepsilon\omega(T)\omega(S).
\end{align*}
Since $\frac{\omega^2(T+re^{i\theta} S)-\omega^2(T)+ 2r\varepsilon \omega(T) \omega(S)}{2r}\geq 0$, passing to the limit, we get $D^{\theta}_{\omega}(T, S)\geq - \varepsilon \omega(T)\omega(S)$. Thus,
  $\displaystyle\inf_{\theta} D^{\theta}_{\omega}(T, S)
\geq -\varepsilon \omega(T) \omega(S)$.

($\Longleftarrow$) Let $\displaystyle\inf_{\theta \in [0, 2\pi)} D^{\theta}_{\omega}(T, S)\geq -\varepsilon \omega(T) \omega(S)$. Then for every $\theta \in [0, 2\pi)$ we have $D^{\theta}_{\omega}(T, S)\geq - \varepsilon \omega(T)\omega(S)$. Hence
\small{
 \begin{align*}
- \varepsilon \omega(T)\omega(S)\leq D^{\theta}_{\omega}(T, S)&=\lim_{r\to 0^+} \frac{\omega^2(T+re^{i\theta} S)-\omega^2(T)+ 2r\varepsilon \omega(T) \omega(S)}{2r}- \varepsilon\omega(T)\omega(S)\\
&=\frac{1}{2} \lim_{r\to 0^+} \frac{h(r)-h(0)}{r-0}- \varepsilon\omega(T)\omega(S),
 \end{align*}}
whence
$h'(0)=\lim_{r\to 0^+} \frac{h(r)-h(0)}{r-0}\geq 0$.

Then the convexity of $h$ implies that $h(r)-h(0)\geq (r-0)h'(0)\geq 0$ and so $h(r)\geq h(0)$  for every $r\geq 0$. Therefore, $\omega^2(T+re^{i\theta} S)\geq \omega^2(T)-2r\varepsilon \omega(T) \omega( S)$ for every $\theta \in [0, 2\pi)$. This entails that $T\perp^{\varepsilon}_{\omega}S$.
\end{proof}
\begin{corollary}
The following statements are equivalent.

(i) The operator $T$ is approximate numerical radius orthogonal to $S$.

(ii) $\displaystyle\inf_{\theta \in [0, 2\pi)} D^{\theta}_{\omega}(T, S)
\geq -\varepsilon \omega(T) \omega(S)$.

(iii) For every $\theta\in[0,2\pi)$, there exists a sequence $\{x_n^{\theta}\}$  of unit vectors in $\mathscr{H}$ such that $\displaystyle\lim_{n\to \infty} |\langle Tx^{\theta}_n, x^{\theta}_n\rangle|=\omega(T)$ and $\displaystyle\lim_{n\to \infty} {\rm  Re}\{e^{-i\theta} \langle Tx^{\theta}_n, x^{\theta}_n\rangle\overline{\langle Sx^{\theta}_n, x^{\theta}_n\rangle}\}\geq -\varepsilon \omega(T) \omega(S).$
\end{corollary}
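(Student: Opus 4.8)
The plan is to read the corollary directly off the two characterization theorems already established, using statement (i) as the common pivot. Statement (i) is, by the Definition, nothing other than the assertion $T\perp^{\varepsilon}_{\omega}S$, so no argument is needed to connect the phrase ``$T$ is approximate numerical radius orthogonal to $S$'' with the relation $\perp^{\varepsilon}_{\omega}$ itself.

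First I would invoke Theorem \ref{31}, which states precisely that $T\perp^{\varepsilon}_{\omega}S$ holds if and only if $\inf_{\theta\in[0,2\pi)} D^{\theta}_{\omega}(T,S)\geq -\varepsilon\omega(T)\omega(S)$; this is verbatim the equivalence (i) $\Leftrightarrow$ (ii). Next I would invoke Theorem \ref{*}, which states that $T\perp^{\varepsilon}_{\omega}S$ holds if and only if for every $\theta\in[0,2\pi)$ there exists a sequence $\{x_n^{\theta}\}$ of unit vectors satisfying $\lim_{n\to\infty}|\langle Tx_n^{\theta},x_n^{\theta}\rangle|=\omega(T)$ together with the prescribed lower bound on $\lim_{n\to\infty}\mathrm{Re}\{e^{-i\theta}\langle Tx_n^{\theta},x_n^{\theta}\rangle\overline{\langle Sx_n^{\theta},x_n^{\theta}\rangle}\}$; this is exactly the equivalence (i) $\Leftrightarrow$ (iii). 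Chaining the two biconditionals through the shared anchor (i) then yields the full cycle (i) $\Leftrightarrow$ (ii) $\Leftrightarrow$ (iii), which is the claim.

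There is essentially no obstacle here: the corollary is a packaging of Theorems \ref{*} and \ref{31}, both of which phrase their characterizations against the identical relation $T\perp^{\varepsilon}_{\omega}S$. The only point worth verifying is the purely cosmetic one, namely that the two limit conditions quoted in (iii) are written exactly as in Theorem \ref{*} and that the infimum bound in (ii) is the one appearing in Theorem \ref{31}, so that the equivalences can be composed without any intermediate reformulation.
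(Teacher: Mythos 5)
Your proposal is correct and matches the paper's intent exactly: the corollary is stated without a written proof precisely because it is the conjunction of Theorem \ref{*} (giving (i) $\Leftrightarrow$ (iii)) and Theorem \ref{31} (giving (i) $\Leftrightarrow$ (ii)), chained through the common statement $T\perp^{\varepsilon}_{\omega}S$. Nothing further is needed.
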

\begin{remark}
Lumber \cite[Theorem 11]{GL} proved that
$$\lim_{r\to 0^+} \frac{\|I+rT\|-1}{r}=\sup_{\|x\|=1} {\rm Re} \langle Tx, x\rangle$$ for any given operator $T$.

Dragomir \cite[Theorem 66]{SDR1} proved  that
$$\lim_{r\to 0^+} \frac{\omega(I+rT)-1}{r}=\sup_{\|x\|=1} {\rm Re}\, \langle Tx, x\rangle.$$
Then for every $\theta\in[0,2\pi)$, we have
\begin{align*}
D^{\theta}_{\omega}(T, I)&=\lim_{r\to 0^+} \frac{\omega^2(I+re^{i\theta} T)-1}{2r}\\
&=\lim_{r\to 0^+} \frac{\omega(I+re^{i\theta}T)-1}{r}\lim_{r\to 0^+} \frac{\omega(I+re^{i\theta}T)+1}{2}\\
&=\lim_{r\to 0^+} \frac{\omega(I+re^{i\theta}T)-1}{r}\\
&=\sup_{\|x\|=1} {\rm Re}\,\langle e^{i\theta}Tx, x\rangle.
\end{align*}
\end{remark}
\begin{remark}\label{33}
We observed that $T\perp^{\varepsilon}_{\omega}S$ if and only if $\displaystyle\inf_{\theta\in[0,2\pi)} D^{\theta}_{\omega}(T, S)\geq -\varepsilon \omega(T) \omega(S)$. In virtue of
\begin{align}\label{amy6}
\lim_{r\to 0^+}\frac{\omega(T+re^{i\theta} S)-\omega(T)}{r}&=
\lim_{r\to 0^+}\frac{\omega^2(T+re^{i\theta} S)-\omega^2(T)}{r(\omega(T+re^{i\theta} S)+\omega(T))} \nonumber\\
&=\lim_{r\to 0^+}\Big(\frac{\omega^2(T+re^{i\theta} S)-\omega^2(T)}{r}\cdot\frac{1}{\omega(T+re^{i\theta} S)+\omega(T)}\Big) \nonumber\\
&=\frac{1}{\omega(T)} \lim_{r\to 0^+}\frac{\omega^2(T+re^{i\theta} S)-\omega^2(T)}{2r} \nonumber\\
&=\frac{1}{\omega(T)}D^{\theta}_{\omega}(T, S),
\end{align}
we can say that $T\perp^{\varepsilon}_{\omega}S$ if and only if
$\displaystyle\lim_{r\to 0^+}\frac{\omega(T+re^{i\theta} S)-\omega(T)}{r}\geq -\varepsilon \omega(S)$ for every $\theta\in[0,2\pi)$.
\end{remark}

If $\theta=0$, then Dragomir \cite{SDR1} proved that \[[S, T]:=\lim_{r\to 0^+} \frac{\omega^2(T+rS)-\omega^2(T)}{2r}\] gives rise to a semi-inner product-type on $\mathbb{B}(\mathscr{H})$, see also \cite{Al}.

Now, we list here some properties of  the above semi-inner product type.
\begin{lemma}
Let $\theta\in[0,2\pi)$. Then the following statements are satisfied.

(i) $[e^{i\theta}T, e^{i\theta}T]=\omega^2(T)$.

(ii) $[ie^{i\theta}T, e^{i\theta}T]=0$ and $[0, T]=[e^{i\theta}T, 0]=0$.

(iii) The following Schwarz type inequality holds
$$\left|\left[e^{i\theta}S, T\right]\right|\leq\omega(T)\omega(S)\quad T, S \in \mathbb{B}(\mathscr{H}).$$

(vi) The mapping $[e^{i\theta}S, T]$ is sub-additive in the first
variable, that is, for all $T, S, R \in \mathbb{B}(\mathscr{H})$,
 $[e^{i\theta}(S+R), T]\leq [e^{i\theta}S, T]+[e^{i\theta}R, T]$.
 \end{lemma}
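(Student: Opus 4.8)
The plan is to read all four parts off the defining limit $[e^{i\theta}S,T]=D^{\theta}_{\omega}(T,S)=\lim_{r\to 0^+}\frac{\omega^2(T+re^{i\theta}S)-\omega^2(T)}{2r}$, using three structural facts about the numerical radius: it is homogeneous, $\omega(\mu A)=|\mu|\,\omega(A)$ for every $\mu\in\mathbb{C}$; it is a norm on $\mathbb{B}(\mathscr{H})$, hence obeys the triangle inequality and is continuous; and $\omega^2$ is (jointly) convex on $\mathbb{B}(\mathscr{H})$, which follows from the triangle inequality for $\omega$ together with the convexity of $t\mapsto t^2$, exactly as in the computation preceding the theorem. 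Parts (i) and (ii) are then direct substitutions, while (iii) rests on the triangle inequality and (vi) on convexity.

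For (i), I would substitute to get $[e^{i\theta}T,e^{i\theta}T]=\lim_{r\to 0^+}\frac{\omega^2((1+r)e^{i\theta}T)-\omega^2(e^{i\theta}T)}{2r}$ and apply homogeneity: $\omega((1+r)e^{i\theta}T)=(1+r)\omega(T)$ and $\omega(e^{i\theta}T)=\omega(T)$, so the numerator is $((1+r)^2-1)\omega^2(T)=(2r+r^2)\omega^2(T)$ and the limit is $\omega^2(T)$. Part (ii) is the same device: for the first identity $e^{i\theta}T+r(ie^{i\theta}T)=(1+ir)e^{i\theta}T$ gives $\omega^2=(1+r^2)\omega^2(T)$, whose difference quotient $\frac{r^2\omega^2(T)}{2r}$ tends to $0$; for $[0,T]$ the numerator is identically zero; and for $[e^{i\theta}T,0]$ the numerator is $\omega^2(re^{i\theta}T)=r^2\omega^2(T)$, again giving limit $0$.

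For the Schwarz-type inequality (iii), I would factor $\omega^2(T+re^{i\theta}S)-\omega^2(T)=\bigl(\omega(T+re^{i\theta}S)-\omega(T)\bigr)\bigl(\omega(T+re^{i\theta}S)+\omega(T)\bigr)$ and bound the first factor by the triangle inequality, $|\omega(T+re^{i\theta}S)-\omega(T)|\le\omega(re^{i\theta}S)=r\,\omega(S)$. Dividing by $2r$ yields $\bigl|\tfrac{\omega^2(T+re^{i\theta}S)-\omega^2(T)}{2r}\bigr|\le\tfrac{\omega(S)}{2}\bigl(\omega(T+re^{i\theta}S)+\omega(T)\bigr)$; letting $r\to 0^+$ and using continuity of $\omega$, the right-hand side tends to $\omega(S)\omega(T)$, which gives $|[e^{i\theta}S,T]|\le\omega(T)\omega(S)$.

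Finally, for subadditivity (vi) the idea is to exploit midpoint convexity of $\omega^2$ directly at the level of the difference quotient. Writing $T+re^{i\theta}(S+R)=\tfrac12(T+2re^{i\theta}S)+\tfrac12(T+2re^{i\theta}R)$ and applying convexity gives $\omega^2(T+re^{i\theta}(S+R))\le\tfrac12\omega^2(T+2re^{i\theta}S)+\tfrac12\omega^2(T+2re^{i\theta}R)$; subtracting $\omega^2(T)$, dividing by $2r$, and substituting $s=2r$ turns the right-hand side into $\frac{\omega^2(T+se^{i\theta}S)-\omega^2(T)}{2s}+\frac{\omega^2(T+se^{i\theta}R)-\omega^2(T)}{2s}$. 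Passing to the limit $r\to 0^+$ (equivalently $s\to 0^+$) then gives $[e^{i\theta}(S+R),T]\le[e^{i\theta}S,T]+[e^{i\theta}R,T]$. The one point requiring care, and where I expect the only real obstacle to lie, is this rescaling: the factor $2$ inside the arguments is essential, since the naive midpoint estimate with $T+re^{i\theta}S$ on the right would not reproduce a difference quotient at scale $s\to 0^+$, and it is the prior convexity result, guaranteeing that all three one-sided limits exist, that justifies splitting the limit of the sum.
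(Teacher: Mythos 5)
Your proposal is correct and follows essentially the same path as the paper: parts (i)--(ii) by homogeneity of $\omega$ (which the paper simply calls evident), part (iii) by combining the triangle-inequality bound $|\omega(T+re^{i\theta}S)-\omega(T)|\le r\,\omega(S)$ with the factorization of $\omega^2(T+re^{i\theta}S)-\omega^2(T)$, which is exactly the content of the paper's displays \eqref{amy6} and \eqref{amy7}, and part (vi) by midpoint convexity of $r\mapsto\omega^2(T+re^{i\theta}S)$ together with the existence of the one-sided derivatives guaranteed by convexity. The only deviation is cosmetic: in (vi) the paper evaluates the combined term at scale $r/2$ on the left while you evaluate the two summands at scale $2r$ on the right, which is the same substitution read in the opposite direction.
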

\begin{proof}
(i) and (ii) are evident.
  
(iii)
  It is easy to see that
\begin{align}\label{amy7}
-\omega(S)\leq \frac{\omega(T)-r \omega(S)-\omega(T)}{r}&\leq\frac{\omega(T+re^{i\theta} S)-\omega(T)}{r} \nonumber\\
&\leq \frac{\omega(T)+r \omega(S)-\omega(T)}{r}= \omega(S).
\end{align}
It follows from \eqref{amy6} and \eqref{amy7} that
$$[e^{i\theta}S, T]=D^{\theta}_{\omega}(T, S)=\omega(T)\lim_{r\to 0^+}\frac{\omega(T+re^{i\theta} S)-\omega(T)}{r}\leq \omega(T)\omega(S)$$.

Similarly, one can show that
$[e^{i\theta}S, T]\geq -\omega(T)\omega(S)$.
Therefore,
$$|[e^{i\theta}S, T]|\leq\omega(T)\omega(S)\quad T, S \in \mathbb{B}(\mathscr{H}).$$
(iv)
Since $f(r)=\omega^2(T+re^{i\theta}S)$ is convex
\begin{align*}
\omega^2\left( \frac{2T+ re^{i\theta} (S+R)}{2}\right)&\leq\frac{1}{2}\omega^2(T+ re^{i\theta}S)+ \frac{1}{2}\omega^2(T+ re^{i\theta}R),
 \end{align*}
whence
\small
\begin{align*}
2\left(\omega^2(T+ \frac{r}{2}e^{i\theta} (S+R))-\omega^2(T)\right)&\leq\left(\omega^2(T+ re^{i\theta}S)-\omega^2(T)\right)+ \left(\omega^2(T+ re^{i\theta}R)-\omega^2(T)\right).
 \end{align*}
 Then
 \begin{align*}
 \lim_{r\to 0^+} \frac{2\Big(\omega^2(T+ \frac{r}{2}e^{i\theta} (S+R))-\omega^2(T)\Big)}{2r}
\leq  \lim_{r\to 0^+}\frac{\omega^2(T+ re^{i\theta}S)}{2r}+ \lim_{r\to 0^+}\frac{\omega^2(T+ re^{i\theta}R)}{2r}.
 \end{align*}
  Hence, $[e^{i\theta}(S+R), T]\leq [e^{i\theta}(S), T]+[e^{i\theta}(R), T]$.
   \end{proof}

 In the following proposition, we state the relation between
  $D^{\theta}_{\omega}(T, S)$ and\\
${\rm Re}\{e^{-i\theta}\langle  Tx^{\theta}_n, x^{\theta}_n\rangle\overline{\langle Sx^{\theta}_n, x^{\theta}_n\rangle}\}$ under some conditions.
 \begin{proposition}\label{35}
 Let $\theta\in[0,2\pi)$ be fixed. If $\{x^{r}_n\} \in M^*_{\omega(T)}\cap M^*_{\omega(T+re^{i\theta}S)}$ for all $r\in \mathbb{R}^+$, then
\[[e^{i\theta}S, T]=D^{\theta}_{\omega}(T, S)=\lim_{r\to 0^+}\lim_{n\to \infty} {\rm Re}\{e^{-i\theta}\langle  Tx^{r}_n, x^{r}_n\rangle\overline{\langle Sx^{r}_n, x^{r}_n\rangle}\}.\]
\end{proposition}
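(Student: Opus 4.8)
The plan is to fix $r\in\mathbb{R}^+$, use both membership hypotheses to rewrite $\omega^2(T)$ and $\omega^2(T+re^{i\theta}S)$ as genuine limits along the \emph{same} sequence $\{x^r_n\}$, and then read off the $\omega$-derivation from the elementary expansion of $|u+v|^2$. Since $\{x^r_n\}\in M^*_{\omega(T+re^{i\theta}S)}$ we have $\omega^2(T+re^{i\theta}S)=\lim_{n\to\infty}|\langle(T+re^{i\theta}S)x^r_n,x^r_n\rangle|^2$, while $\{x^r_n\}\in M^*_{\omega(T)}$ gives $\omega^2(T)=\lim_{n\to\infty}|\langle Tx^r_n,x^r_n\rangle|^2$. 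Both limits exist, so their difference equals the limit of the termwise difference:
$$\omega^2(T+re^{i\theta}S)-\omega^2(T)=\lim_{n\to\infty}\Big(|\langle(T+re^{i\theta}S)x^r_n,x^r_n\rangle|^2-|\langle Tx^r_n,x^r_n\rangle|^2\Big).$$

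Next I would expand the bracketed quantity via $|u+v|^2-|u|^2=2\,{\rm Re}(\bar uv)+|v|^2$ with $u=\langle Tx^r_n,x^r_n\rangle$ and $v=re^{i\theta}\langle Sx^r_n,x^r_n\rangle$; using ${\rm Re}(e^{i\theta}\bar ab)={\rm Re}(e^{-i\theta}a\bar b)$ this produces exactly $2r\,{\rm Re}\{e^{-i\theta}\langle Tx^r_n,x^r_n\rangle\overline{\langle Sx^r_n,x^r_n\rangle}\}+r^2|\langle Sx^r_n,x^r_n\rangle|^2$. Dividing by $2r$ yields, for each fixed $r>0$,
$$\frac{\omega^2(T+re^{i\theta}S)-\omega^2(T)}{2r}=\lim_{n\to\infty}\Big({\rm Re}\{e^{-i\theta}\langle Tx^r_n,x^r_n\rangle\overline{\langle Sx^r_n,x^r_n\rangle}\}+\tfrac{r}{2}|\langle Sx^r_n,x^r_n\rangle|^2\Big).$$

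To separate the two summands I would pass to a subsequence of $\{x^r_n\}$ along which both bounded scalar sequences $\langle Tx^r_n,x^r_n\rangle$ and $\langle Sx^r_n,x^r_n\rangle$ converge in $\mathbb{C}$; such a subsequence still lies in $M^*_{\omega(T)}\cap M^*_{\omega(T+re^{i\theta}S)}$, exactly as in the subsequence argument in the proof of Theorem~\ref{*}, so nothing is lost. Then $\lim_{n\to\infty}{\rm Re}\{e^{-i\theta}\langle Tx^r_n,x^r_n\rangle\overline{\langle Sx^r_n,x^r_n\rangle}\}$ exists and the leftover term obeys $0\le\lim_{n\to\infty}\tfrac{r}{2}|\langle Sx^r_n,x^r_n\rangle|^2\le\tfrac{r}{2}\omega^2(S)$. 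Letting $r\to0^+$ squeezes the leftover term to $0$, and since $[e^{i\theta}S,T]=D^{\theta}_{\omega}(T,S)$ by the definition of the $\omega$-derivation, the desired chain of equalities follows.

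The step I expect to be the main obstacle is the legitimacy of splitting the inner limit over $n$: a priori only the limit of the \emph{sum} is guaranteed, and the quadratic remainder $\tfrac r2|\langle Sx^r_n,x^r_n\rangle|^2$ need not converge on its own. The passage to a convergent subsequence — which preserves membership in both $M^*$ sets because the defining modulus-limits are unaffected — is what makes each individual limit meaningful, and the uniform bound $|\langle Sx^r_n,x^r_n\rangle|\le\omega(S)$ then forces the remainder to vanish uniformly in $n$ as $r\to0^+$, so the iterated limit behaves as required.
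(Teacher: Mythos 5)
Your proof follows essentially the same route as the paper's: expand $\omega^2(T+re^{i\theta}S)$ along the common sequence, cancel $\omega^2(T)$ using membership in $M^*_{\omega(T)}$, divide by $2r$, and squeeze the quadratic remainder (bounded by $\tfrac{r}{2}\omega^2(S)$) as $r\to 0^+$. Your additional subsequence step, which guarantees that the inner limit $\lim_{n\to\infty}{\rm Re}\{e^{-i\theta}\langle Tx^{r}_n, x^{r}_n\rangle\overline{\langle Sx^{r}_n, x^{r}_n\rangle}\}$ exists on its own, addresses a point the paper's proof passes over silently, so your version is, if anything, slightly more careful.
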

\begin{proof}
We have
\begin{align*}
&\omega^2(T+re^{i\theta}S)\\
& = \lim_{n\to \infty}|\langle (T+ re^{i\theta}S)x^{r}_n, x^{r}_n\rangle|^2\\
&=\lim_{n\to \infty}\left(|\langle Tx^{r}_n, x^{r}_n \rangle|^2+2r{\rm Re}\{e^{-i\theta}\langle  Tx^{r}_n, x^{r}_n\rangle\overline{\langle Sx^{r}_n, x^{r}_n\rangle}\}+r^2|\langle Sx^{r}_n, x^{r}_n\rangle|^2 \right).
\end{align*}
Thus,
\begin{small}
\begin{align*}
&\frac{\omega^2(T+re^{i\theta}S)-\omega^2(T)}{2r}\\
&=
 \frac{\lim_{n\to \infty}\left(|\langle Tx^{r}_n, x^{r}_n \rangle|^2+2r{\rm Re}\{e^{-i\theta}\langle  Tx^{r}_n, x^{r}_n\rangle\overline{\langle Sx^{r}_n, x^{r}_n\rangle}\}+r^2|\langle Sx^{r}_n, x^{r}_n\rangle|^2\right)-\omega^2(T)}{2r} \\
&= \lim_{n\to \infty}\frac{ 2r{\rm Re}\{e^{-i\theta}\langle  Tx^{r}_n, x^{r}_n\rangle\overline{\langle Sx^{r}_n, x^{r}_n\rangle}\}+r^2|\langle Sx^{r}_n, x^{r}_n\rangle|^2 }{2r}
\end{align*}
Hence
\begin{align*}
\lim_{n\to \infty} {\rm Re}\{e^{-i\theta}\langle  Tx^{r}_n, x^{r}_n\rangle\overline{\langle S x^{r}_n, x^{r}_n\rangle}\} &\leq \frac{\omega^2(T+re^{i\theta}S)-\omega^2(T)}{2r}\\
&\leq\lim_{n\to \infty} {\rm Re}\{e^{-i\theta}\langle  Tx^{r}_n, x^{r}_n\rangle\overline{\langle Sx^{r}_n, x^{r}_n\rangle}\}  +r\omega(S).
\end{align*}
\end{small}
Letting $r\to 0^+$, we get the desired equality.
\end{proof}

\begin{remark}
Under the conditions of Proposition \ref{35}, Remark \ref{33} implies that
\small{
\begin{align*}
\lim_{r\to 0^+}\frac{\omega(T+re^{i\theta} S)-\omega(T)}{r}
&=\frac{1}{\omega(T)}D^{\theta}_{\omega}(T, S)=\frac{1}{\omega(T)}\lim_{r\to0^+}\lim_{n\to \infty}\left({\rm Re}\{e^{-i\theta}\langle  Tx^{r}_n, x^{r}_n\rangle\overline{\langle Sx^{r}_n, x^{r}_n\rangle}\}\right).
\end{align*}}
\end{remark}


\end{document}